\documentclass{amsart}

\usepackage{amsthm,amsfonts,amsmath,amssymb,latexsym,epsfig,mathrsfs,yfonts,marvosym}
\usepackage[usenames]{color}

\DeclareMathAlphabet\oldmathcal{OMS}        {cmsy}{b}{n}
\SetMathAlphabet    \oldmathcal{normal}{OMS}{cmsy}{m}{n}
\DeclareMathAlphabet\oldmathbcal{OMS}       {cmsy}{b}{n}

\usepackage{eucal}

\usepackage{graphicx}
\usepackage[all]{xy}
\usepackage{epsfig}

\newtheorem{theorem}{Theorem}[section]
\newtheorem{lemma}[theorem]{Lemma}
\newtheorem{proposition}[theorem]{Proposition}

\newtheorem{remark}{Remark}[section]

\newtheorem{ack}{Acknowledgments\!}

\def\BOne{{\mathchoice {\rm 1\mskip-4mu l} {\rm 1\mskip-4mu l}
                          {\rm 1\mskip-4.5mu l} {\rm 1\mskip-5mu l}}}

\def\fract#1#2{\raise4pt\hbox{$ #1 \atop #2 $}}

\def\bbc{{\mathbb C}}

\def\bbr{{\mathbb R}}

\def\bbz{{\mathbb Z}}

\def\gre{\epsilon}

\def\gri{\iota}
\def\grk{\kappa}
\def\grl{\lambda}

\def\grs{\sigma}
\def\grt{\tau}

\def\grG{\Gamma}

\def\grL{\Lambda}

\def\grS{\Sigma}

\def\bfa{{\bf a}}
\def\bfb{{\bf b}}

\def\bfl{{\bf l}}

\def\bfx{{\bf x}}
\def\bfy{{\bf y}}
\def\bfz{{\bf z}}

\def\cala{{\mathcal A}}

\def\cald{{\mathcal D}}

\def\calf{{\mathcal F}}

\def\cals{{\oldmathcal S}}

\def\la#1{\hbox to #1pc{\leftarrowfill}}
\def\ra#1{\hbox to #1pc{\rightarrowfill}}

\def\ga{{\mathfrak a}}

\def\gc{{\mathfrak c}}

\def\ge{{\mathfrak e}}
\def\gf{{\mathfrak f}}

\def\gh{{\mathfrak h}}
\def\gi{{\mathfrak i}}

\def\gn{{\mathfrak n}}
\def\go{{\mathfrak o}}

\def\gr{{\mathfrak r}}

\def\gt{{\mathfrak t}}
\def\gu{{\mathfrak u}}

\def\gA{{\mathfrak A}}

\def\gC{{\mathfrak C}}
\def\gD{{\mathfrak D}}

\def\gH{{\mathfrak H}}

\def\gN{{\mathfrak N}}

\def\gR{{\mathfrak R}}

\def\gX{{\mathfrak X}}

\def\txi{\tilde{\xi}}
\def\teta{\tilde{\eta}}
\def\tPhi{\tilde{\Phi}}
\def\tg{\tilde{g}}

\def\hook{\mathbin{\hbox to 6pt{%
                 \vrule height0.4pt width5pt depth0pt
                 \kern-.4pt
                 \vrule height6pt width0.4pt depth0pt\hss}}}

\begin{document}
\bibliographystyle{amsalpha}

\title{The Sasakian Geometry of the Heisenberg Group}\thanks{During the preparation of this work the author was partially supported by NSF grant DMS-0504367.}

\author{Charles P. Boyer}
\address{Department of Mathematics and Statistics,
University of New Mexico, Albuquerque, NM 87131.}

\email{cboyer@math.unm.edu} 

\begin{abstract}
In this note I study the Sasakian geometry associated to the standard CR structure on the Heisenberg group, and prove that the Sasaki cone coincides with the set of extremal Sasakian structures. Moreover, the scalar curvature of these extremal metrics is constant if and only if the metric has $\Phi$-sectional curvature $-3.$ I also briefly discuss some relations with the well-know sub-Riemannian geometry of the Heisenberg group as well as the standard Sasakian structure induced on compact quotients.
\end{abstract}

\maketitle

\centerline{\it Dedicated to Professor S. Ianus on the occasion of his 70th Birthday}
\bigskip

\section{Introduction}

Recently the Heisenberg group $\gH^{2n+1}$ has been studied from many viewpoints, in particular, harmonic analysis and probability theory, and an important underlying theme is sub-Riemannian or Carnot-Carath\'eodory geometry \cite{Gro96,Neu96,Tha98,BCDT01,Mon02,CDPT07,CCDG07}. Of course, it arose from quantum physics. However, from the point of view of Riemannian geometry, there is a very natural homogeneous Riemannian metric on $\gH^{2n+1}=\bbr^{2n+1}$. This metric appears to have been discovered over 40 years ago by Sasaki \cite{Sas65} and studied a bit later by Tanno \cite{Tan69a,Tan69b}, although the relation with the Heisenberg group was not noticed at that time. It has constant $\Phi$-sectional curvature equal to $-3$, where $\Phi$ is the endomorphism defining the natural CR structure on $\gH^{2n+1}.$ The relationship with the Heisenberg group has been noted in  \cite{BGM06,BGO06,BG05}, and its appearance in CR spherical geometry was studied further in \cite{Kam06,Dav08}. Indeed, Kamishima has noted the important connection between the CR structure on  $\gH^{2n+1}$ and the Bochner-flat structures on $\bbc^n$ classified by Bryant \cite{Bry01}. From the sub-Riemannian viewpoint it was noted in \cite{CDPT07} that the Carnot-Carath\'eodory metric can be viewed as an anisotropic blow-up of the Sasakian metric.

In \cite{BGS06} it was shown that in the case of the standard CR structure on the sphere $S^{2n+1}$ the Sasaki cone coincides with the set of extremal Sasakian structures, and that the only Sasakian metric of constant scalar curvature is the round sphere metric. In this note I prove a similar result for the case of the Heisenberg group $\gH^{2n+1}$. However, in this case due to the existence of dilations in the CR automorphism group of $\gH^{2n+1}$, the Sasaki cone has only dimension $n$ instead of $n+1.$ Nevertheless, in both cases the result is closely related to Bryant's \cite{Bry01} classification of Bochner-flat metrics, as well as Kamishima's work \cite{Kam06}.

\begin{ack}
{\rm The author would like to thank the Courant Institute of Mathematical Sciences of New York University for its hospitality during which this work was done.
I would also like to thank Jeff Cheeger, Misha Gromov, and John Lott for stimulating conversations and Vestislav Apostolov for an important email communication.}
\end{ack}

\section{Brief Review of Sasakian Geometry}

A Sasakian structure is a particular type of contact metric structure. Recall that a contact structure can be given by a codimension one subbundle $\cald$ of the tangent bundle $TM$ which is as far from being integrable as possible. Alternatively, $\cald$ can be defined as the kernel of a smooth 1-form $\eta$ which satisfies $\eta\wedge (d\eta)^n\neq 0$ everywhere on $M.$ The contact structure $\cald$ only depends on $\eta$ up to a multiple by a nowhere vanishing smooth function. A choice of almost complex structure $J$ on the vector bundle $\cald$ endows $M$ with an almost CR structure $(\cald,J).$ Moreover, since $d\eta$ provides $\cald$ with a symplectic structure, we can demand that $J$ is both {\it compatible} with $d\eta$ and that $d\eta$ is {\it tamed} by $J$ in the sense that $d\eta\circ(J\otimes \BOne)$ is a positive definite symmetric bilinear form on $\cald.$ If we extend $J$ to a smooth endomorphism $\Phi$ of $TM$ by demanding that it annihilate the Reeb vector field $\xi$ of $\eta.$ We then have a canonically defined Riemannian metric by setting $g= d\eta\circ(\Phi\otimes \BOne)\oplus \eta\otimes\eta.$ The quadruple $(\xi,\eta,\Phi,g)$ is called a {\it contact metric structure associated} to the contact structure $\cald$. If the Reeb vector field $\xi$ is a Killing field or equivalently an infinitesimal CR transformation, the contact metric structure $(\xi,\eta,\Phi,g)$ is said to be {\it K-contact}, and if in addition the almost CR structure is integrable, $(\xi,\eta,\Phi,g)$ is a {\it Sasakian} structure. A contact structure $\cald$ which admits a contact form $\eta$ and endomorphism $\Phi$ such that $(\xi,\eta,\Phi,g)$ is (K-contact) Sasakian is said to be of {\it (K-contact) Sasaki type}. We also say that the almost CR structure $(\cald,J)$ satisfying $\cald=\ker\eta$ and $J=\Phi_\cald$ is of {\it (K-contact) Sasaki type}. In \cite{BGS06} the authors developed a theory of extremal Sasakian metrics on compact manifolds. Extremal Sasakian metrics are the critical points of the energy functional which is just the $L^2$-norm of the scalar curvature $s_g$. This coincides with the fact that the transverse metric be an extremal K\"ahler metric. The Euler-Lagrange equations were then shown to be equivalent to the $(1,0)$ gradient vector field $\partial_g^\#s_g$ being transversally holomorphic. Thus, Sasakian metrics of constant scalar curvature are extremal. These include Sasaki-Einstein and more generally Sasaki-eta-Einstein metrics \cite{BGM06}. Since constants are not generally $L^2$ functions on non-compact manifolds, I simply define an {\it extremal Sasakian metric} on a non-compact manifold to be one such that $\partial_g^\#s_g$ is transversally holomorphic.

There is a natural sub-Riemannian geometry associated to any contact manifold \cite{Str86,Gro96}. We construct a metric $d$ on the contact manifold $(M,\cald)$ by choosing a Riemannian metric $g$ and defining the distance $d(p,q)$ between any two points $p,q\in M$ by taking the infimum of the distance with respect to $g$ over all piecewise smooth curves joining $p$ to $q$ whose tangents lie in $\cald$ at every point. The metric $d_\cald$ so constructed is called the {\it Carnot-Carath\'eodory metric} or {\it CC metric} for short, and the metric topology on $M$ coincides with the manifold topology. Moreover, its dependence on the Riemannian metric is mild in the sense that if $d_\cald^i$ are CC metrics computed with respect to the Riemannian metrics $g_i$ for $i=1,2$, then they are bi-Lipschitz equivalent on compact subsets of $M$ \cite{Gro96}. When $g$ is the Riemannian metric of a contact metric structure $(\xi,\eta,\Phi,g)$, the transverse metric $g_T=g|_{\cald\times\cald}$ is also called a {\it sub-Riemannian metric}, and if $d_g$ denotes the distance with respect to the Riemannian metric $g$, then we have $d_g\leq d_\cald$ where $d_\cald$ is the sub-Riemannian distance with respect to $g_T.$ So the transverse metric $g_T$ plays two distinct roles, one as a Riemannian metric on the transverse space, and second as a sub-Riemannian metric on all of $M.$ Furthermore, beginning with a contact metric $g=g_T+\eta\otimes \eta$, one can form the {\it anisotropic blow-up} of $g$ by forming the family of ``penalized metrics'' $g_L=g_T+L\eta\otimes\eta$ for some constant $L.$ We can then take the limit $L\rightarrow \infty$ in an appropriate sense. The length becomes squashed in the vertical direction, and the metric spaces $d_{g_L}$ converge in the pointed (fixed base point) Gromov-Hausdorff topology to the CC metric $d_\cald$\footnote{A proof of this in the case of  the Heisenberg group is given in \cite{CDPT07}, but it holds generally for contact metric structures as explained to me by Misha Gromov.}. The small CC boxes \cite{Gro96} have length $\gre$ in each horizontal direction and $\gre^2$ in the vertical dimension; thus, the Hausdorff dimension of a contact manifold with its CC measure is $2n+2.$ A key ingredient in the construction of Carnot-Carath\'eodory metrics is the so-called {\it bracket generating} condition that for any local frame $\{X_1,\cdots,X_{2n}\}$ of $\cald$ the iterated brackets $\{X_i,[X_i,X_j],[X_i,[X_j,X_k]],\cdots\}$ span the whole tangent space at each point. In the case of a contact manifold this is {\it two-step} or {\it strongly} bracket generating, that is, $\{X_i,[X_i,X_j]\}$ span $T_pM$ at all points. A well-known theorem of Chow says that if a sub-Riemannian manifold is bracket generating then any two points can be joined by a curve whose tangent vectors lie in the horizontal subspace at all point along the curve (assuming $M$ is connected, of course).

\section{The Groups of Sasakian and CR Automorphisms}

For any CR structure $(\cald,J)$ on a manifold $M$ we can define the group of CR transformations
\begin{equation}\label{crtrans}
\gC\gR(\cald,J)=\{\phi\in \gD\gi\gf\gf(M)\; | \; \phi_*\cald\subset \cald,~ \phi_*J=J\phi_*\}\, .
\end{equation}
Its Lie algebra $\gc\gr(\cald,J)$ 
can be characterized as
\begin{equation}\label{infcrtrans}
\gc\gr(\cald,J)=\{X\in \gX(M)\; | \; [X,\cald]\subset \cald,~ \pounds_XJ=0\}\, .
\end{equation}
When the CR structure is strictly pseudoconvex the group $\gC\gR(\cald,J)$ is a Lie group \cite{ChMo74,BRWZ04}. 

\begin{lemma}\label{coorient}
If $(\cald,J)$ is a strictly pseudoconvex almost CR structure, the group $\gC\gR(\cald,J)$ preserves the orientation of the bundle $\cald.$
\end{lemma}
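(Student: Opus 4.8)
The plan is to exploit the fact that the fibrewise complex structure $J$ already orients $\cald$, so that the defining relation $\phi_*J=J\phi_*$ forces orientation preservation for purely linear-algebraic reasons. Strict pseudoconvexity plays no essential role in the argument beyond guaranteeing (via the cited references) that $\gC\gR(\cald,J)$ is a Lie group acting smoothly; the orientation statement holds at the level of each fibre.

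First I would note that, since $J^2=-\BOne$ on $\cald$, the endomorphism $J$ turns each fibre $\cald_p$ into a complex vector space of complex dimension $n$. Choosing any $J$-complex basis $\{v_1,\ldots,v_n\}$ of $\cald_p$ and forming the real frame $\{v_1,Jv_1,\ldots,v_n,Jv_n\}$ defines an orientation of $\cald_p$. Two such frames differ by an element of $GL(n,\bbc)\subset GL(2n,\bbr)$, whose real determinant equals $|\det_{\bbc}|^2>0$; hence the orientation is independent of the chosen complex basis. As $J$ is smooth, these pointwise orientations fit together into a well-defined orientation of the real bundle $\cald$, namely its canonical complex orientation.

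Next, for $\phi\in\gC\gR(\cald,J)$ I would observe that $\phi$ is a diffeomorphism, so $\phi_*$ is a fibrewise linear isomorphism of $TM$; combined with $\phi_*\cald\subset\cald$ and the constancy of the rank of $\cald$, this gives $\phi_*\cald=\cald$ and thus a bundle isomorphism $\phi_*|_\cald\colon\cald_p\to\cald_{\phi(p)}$ covering $\phi$. The condition $\phi_*J=J\phi_*$ says precisely that each such restriction is $\bbc$-linear with respect to the $J$-complex structures on source and target.

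Finally, a $\bbc$-linear isomorphism of complex vector spaces carries a complex basis to a complex basis, hence carries the canonical orientation of $\cald_p$ to that of $\cald_{\phi(p)}$; equivalently its real determinant is positive. Therefore $\phi_*$ preserves the complex orientation of $\cald$, which is the assertion. I do not expect any serious obstacle: the content is the elementary fact that complex-linear maps are orientation-preserving, and the only point demanding care is the verification that $\phi_*|_\cald$ is a genuine $\bbc$-linear isomorphism of fibres, for which one uses that $\phi$ is a diffeomorphism and that $\cald$ has constant rank.
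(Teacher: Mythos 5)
Your argument is internally correct, but it proves a different statement from the one the paper intends, and the discrepancy matters. The lemma's label (\texttt{coorient}) and the paper's own proof show that ``orientation of the bundle $\cald$'' is meant as the \emph{co-orientation} of the hyperplane field $\cald\subset TM$: the claim is that every $\phi\in\gC\gR(\cald,J)$ satisfies $\phi^*\eta=f_\phi\eta$ with $f_\phi>0$, and never $\phi^*\eta=-f_\phi\eta$. This is the version the paper actually uses downstream: the invariance of the cone $\gc\gr^+(\cald,J)=\{\xi'\in\gc\gr(\cald,J)~|~\eta(\xi')>0\}$ under the adjoint action of $\gC\gR(\cald,J)$ would fail if a CR transformation could reverse the sign of $\eta$. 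What you prove instead is that the fibrewise maps $\phi_*|_{\cald}\colon\cald_p\to\cald_{\phi(p)}$ preserve the complex orientation of the fibres induced by $J$ --- an elementary fact ($\bbc$-linear isomorphisms have positive real determinant) that, as you correctly note, needs no pseudoconvexity. That very remark is the tell-tale sign of the gap: the paper's statement genuinely requires strict pseudoconvexity. The paper argues by contradiction: if $\phi^*\eta=-f_\phi\eta$ with $f_\phi>0$ and $\phi_*J=J\phi_*$, then restricting to $\cald\times\cald$ (where the term $df_\phi\wedge\eta$ vanishes) one gets $\phi^*\bigl(d\eta\circ(J\otimes\BOne)\bigr)=-f_\phi\,d\eta\circ(J\otimes\BOne)$, so the positive definite Levi form would pull back to a negative definite form under a $J$-commuting bundle isomorphism, which is impossible. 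Definiteness of the Levi form is exactly what rules out the sign flip, and your proposal never engages with it.

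The two statements can be connected, but only in half the cases, which shows the gap is real rather than cosmetic. Since $J$ is tamed by $d\eta$, the complex orientation of $\cald_p$ that you construct agrees with the symplectic orientation defined by $(d\eta)^n|_{\cald}$; if $\phi^*\eta=-f_\phi\eta$ then $\phi^*(d\eta)^n|_{\cald}=(-f_\phi)^n(d\eta)^n|_{\cald}$. For $n$ odd, $(-f_\phi)^n<0$, so your fibre-orientation result would indeed force the contradiction and your argument could be completed. For $n$ even, however, $(-f_\phi)^n>0$, and preservation of fibre orientations places no obstruction at all on a co-orientation-reversing CR map; there the Levi-form definiteness argument of the paper is indispensable. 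So as written the proposal does not establish the lemma's content --- that $\phi^*\eta$ is a \emph{positive} multiple of $\eta$ --- and to repair it you should either run the paper's Levi-form computation or, at minimum, supplement your parity argument with it in the even-dimensional case.
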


\begin{proof}
Suppose that $\phi\in\gC\gR(\cald,J)$ reverses the orientation of $\cald$, i.e. $\phi^*\eta=-f_\phi\eta$ where $f_\phi>0$ and $J^\phi=\phi^{-1}_*J\phi_*=J.$ We show that the Levi form changes sign to give a contradiction. We have 
$$\phi^*(d\eta\circ(J\otimes \BOne))=-d(f_\phi\eta)\circ(J^\phi\otimes \BOne)=-f_\phi d\eta\circ(J\otimes \BOne).$$
\end{proof}

Since $(\cald,J)$ is strictly pseudoconvex, a choice of contact 1-form $\eta$ defines a contact metric structure $\cals=(\xi,\eta,\Phi,g)$ with $\xi$ the Reeb vector field, $J=\Phi|_\cald,$ and $g=d\eta\circ(\Phi\otimes \BOne)\oplus \eta\otimes\eta.$ The automorphism group $\gA\gu\gt(\cals)$ of $\cals$ is the subgroup of $\gC\gR(\cald,J)$ that leaves $\cals$ invariant. In fact it is easy to see that 
$$\gA\gu\gt(\cals)=\{\phi\in \gC\gR(\cald,J)~|~ \phi^*\eta=\eta\}.$$
If the CR structure is of Sasaki type, there is a choice of $\eta$ such that the contact metric structure is Sasakian. It is this situation that interests us.

The Sasaki cone was defined in \cite{BGS06} (Definition 6.7) to be the moduli space of Sasakian structures that are compatible with a given strictly pseudoconvex CR structure $(\cald,J);$ however, it is often convenient to loosen this definition a bit, and think of the Sasaki cone as a kind of pre-moduli space, that is, before modding out by discrete groups. This is an abuse of notation that we have used in \cite{BGS06}. In order to distinguish the two, we shall refer to the latter as the {\it unreduced Sasaki cone}. First, we mention that the construction clearly works equally well for K-contact structures, and goes as follows: fix an almost CR structure of K-contact type, $(\cald,J)$, and choose $\eta$ so that $(\xi,\eta,\Phi,g)$ is K-contact. Define $\gc\gr^+(\cald,J)$ to be the subset of the Lie algebra $\gc\gr(\cald,J)$ defined by
\begin{equation}\label{cr+}
\gc\gr^+(\cald,J)=\{\xi'\in \gc\gr(\cald,J)~|~\eta(\xi')>0\}.
\end{equation}
The set $\gc\gr^+(\cald,J)$ is a convex cone in $\gc\gr(\cald,J)$, and it is open when $M$ is compact\footnote{Since \cite{BGS06} deals almost exclusively with compact manifolds, the compactness assumption was not stated explicitly in Lemma 6.4 of \cite{BGS06}. As we shall see the set $\gc\gr^+(\cald,J)$ is not necessarily open when $M$ is non-compact.}. Furthermore, $\gc\gr^+(\cald,J)$ is invariant under the adjoint action of $\gC\gR(\cald,J)$, and is bijective to the set of K-contact structures compatible with $(\cald,J),$ namely

\begin{equation}
{\mathcal S}(\cald,J)=\left\{
\begin{array}{c}
 \cals=(\xi,\eta,\Phi,
g):\; \cals \; {\rm a~ \text{K-contact}~ structure} \\
({\rm ker}\, \eta, \Phi \mid_{{\rm ker}\, \eta})=
(\cald,J)\end{array} \right\},
\end{equation}
since a contact metric structure $\cals=(\xi,\eta,\Phi,g)$ with underlying almost CR structure $(\cald,J)$ is K-contact if and only if $\xi\in\gc\gr(\cald,J).$ Now fix a K-contact structure $\cals$ and let $\ga$ be a maximal Abelian subalgebra of $\gc\gr(\cald,J)$ containing $\xi.$ We then say that the maximal Abelian subalgebra is of {\it Reeb type} \cite{BG00b}. When both $M$ and $\gC\gR(\cald,J)$ are compact $\ga$ is unique up to conjugacy; however, when either $M$ or $\gC\gR(\cald,J)$ is non-compact there can be many conjugacy classes of maximal Abelian subalgebras. In the case of the sphere the group $\gC\gR(\cald,J)$ is non-compact, but the positivity requirement determined the Sasaki cone \cite{BGS06}. In the non-compact case we shall make a maximal positivity requirement. For any Abelian subalgebra $\ga\subset \gc\gr(\cald,J)$ we set $\ga^+=\ga\cap \gc\gr^+(\cald,J)$. We say that the subset $\ga^+$ is {\it maximal} if whenever $\ga^+\subset \ga_1^+$ for some other Abelian subalgebra $\ga_1$ we have $\ga^+=\ga_1^+.$ I now define an  {\it unreduced Sasaki cone} to be a maximal $\ga^+.$

As in \cite{BGS06} we define the {\it (reduced) Sasaki cone} to be the moduli space of Sasakian (K-contact) structures associated to the strictly pseudoconvex (almost) CR structure $(\cald,J)$ and denote it  by 
$$\grk(\cald,J)={\mathcal S}(\cald,J)/\gC\gR(\cald,J).$$ 
When the action of $\gC\gR(\cald,J)$ on ${\mathcal S}(\cald,J)$ is proper $\grk(\cald,J)$ is always a well-defined Hausdorff space.
I remark that the dimension of $\ga^+$ does not necessarily equal the dimension of $\ga;$ however, we have

\begin{proposition}
Let $\ga^+$ be the Sasaki cone of a contact structure $\cald$ of K-contact type. Let $\grk,k$ denote the  dimensions of $\ga^+$ and $\ga$, respectively. Then  $1\leq \grk\leq k,$ and $\grk=k$ if $M$ is compact. 
\end{proposition}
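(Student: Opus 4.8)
The plan is to establish the three assertions $\grk\geq 1$, $\grk\leq k$, and the equality $\grk=k$ for compact $M$ in that order, since each rests on the preceding. First I would argue that $\grk\geq 1.$ The Sasaki cone $\ga^+$ is by definition nonempty: it was constructed precisely as a maximal $\ga\cap\gc\gr^+(\cald,J)$, and the ambient fixed Reeb field $\xi$ lies in it since $\eta(\xi)=1>0$ by normalization of the contact form. Moreover $\ga^+$ is stable under positive scaling, because if $\xi'\in\ga^+$ then $t\xi'\in\ga$ for all $t$ and $\eta(t\xi')=t\,\eta(\xi')>0$ for $t>0$; thus $\ga^+$ contains an open ray through $\xi$, whence its dimension $\grk$ is at least $1.$

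The inequality $\grk\leq k$ is essentially tautological from the definitions: $\ga^+=\ga\cap\gc\gr^+(\cald,J)$ is a convex subset of the $k$-dimensional vector space $\ga$, so its dimension (as the dimension of its affine span, or equivalently of the cone it generates) cannot exceed $k.$ I would phrase this carefully since $\ga^+$ need not be open in $\ga$ when $M$ is non-compact — this is exactly the subtlety flagged in the footnote after \eqref{cr+} — so I interpret $\grk=\dim\ga^+$ as the dimension of the smallest linear subspace of $\ga$ containing $\ga^+.$ With that reading $\grk\leq k$ is immediate.

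The real content is the equality $\grk=k$ when $M$ is compact, and this is where the main obstacle lies. Here I would invoke the openness of $\gc\gr^+(\cald,J)$ in $\gc\gr(\cald,J)$, which holds for compact $M$ as stated in the text after \eqref{cr+}. Since $\xi\in\ga\cap\gc\gr^+(\cald,J)$ and $\gc\gr^+(\cald,J)$ is open, its intersection with the linear subspace $\ga$ is open in $\ga$; an open subset of $\ga$ that is nonempty necessarily has full dimension $k$, so its linear span is all of $\ga$ and $\grk=k.$ The delicate point I expect to verify most carefully is precisely the openness of $\gc\gr^+(\cald,J)$ for compact $M$: the condition $\eta(\xi')>0$ is a pointwise positivity condition on $M$, and an infinitesimal perturbation $\xi'\to\xi'+\epsilon Y$ with $Y\in\gc\gr(\cald,J)$ keeps $\eta(\xi')$ positive at every point only because compactness gives a uniform positive lower bound for $\eta(\xi')$ and a uniform bound on $\eta(Y)$, so small $\epsilon$ preserves positivity everywhere simultaneously. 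On a non-compact manifold $\eta(\xi')$ may approach $0$ at infinity, which is exactly why openness — and hence the equality $\grk=k$ — can fail there, and why the proposition asserts equality only in the compact case.
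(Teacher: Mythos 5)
Your proposal is correct and follows essentially the same route as the paper's own (very terse) proof: the bound $1\leq\grk$ comes from the ray of Reeb fields through $\xi$ in $\ga^+$, and the equality $\grk=k$ in the compact case comes from the openness of $\ga^+$ in $\ga$, which the paper simply cites and you justify via the uniform positivity bound that compactness provides. The extra care you take with the non-compact failure of openness matches exactly the subtlety the paper flags in its footnote, so nothing is missing.
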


\begin{proof}
The first statement is immediate from the definition of $\ga^+$ and the fact that it contains at least a ray of Reeb vector fields. The last statement follows from the fact that $\ga^+$ is open in $\ga$ when $M$ is compact. 
\end{proof}

The following generalizes a result of \cite{BGS06} to the case when $M$ is not necessarily compact:

\begin{proposition}\label{properprop}
Let $(M,\cald)$ be a contact manifold of Sasaki type with CR structure $(\cald,J).$ Suppose that $(M,\cald,J)$ is not CR equivalent to the Heisenberg group $\gH^{2n+1}=\bbr^{2n+1}$ nor the sphere $S^{2n+1}$ with their standard CR structures. Then there exists a Sasakian structure $\cals=(\xi,\eta,\Phi,g)$ with $(\ker\eta,\Phi |_\cald)=(\cald,J)$ such that $\gA\gu\gt(\cals)=\gC\gR(\cald,J).$
\end{proposition}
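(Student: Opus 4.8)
The plan is to leverage the well-known dichotomy for the automorphism group of a strictly pseudoconvex CR manifold: either the CR automorphism group acts on $M$ in a way that permits a compatible invariant contact form, or the manifold is CR equivalent to one of two exceptional model spaces. The strategy is to produce an $\gA\gu\gt(\cals)$-invariant (equivalently $\gC\gR(\cald,J)$-invariant) contact form $\eta$ whose Reeb field lies in the center of $\gc\gr(\cald,J)$, which forces every CR transformation to preserve $\eta$ and hence land in $\gA\gu\gt(\cals)$. Since the inclusion $\gA\gu\gt(\cals)\subseteq \gC\gR(\cald,J)$ is automatic, only the reverse inclusion requires work.

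\medskip
\noindent\emph{Step 1: Reduce the reverse inclusion to finding an invariant contact form.}
By the identification $\gA\gu\gt(\cals)=\{\phi\in\gC\gR(\cald,J)\mid \phi^*\eta=\eta\}$ established in the text, it suffices to find a contact form $\eta$ (with $\ker\eta=\cald$, $J=\Phi|_\cald$) such that $\phi^*\eta=\eta$ for \emph{every} $\phi\in\gC\gR(\cald,J)$. By Lemma~\ref{coorient} every $\phi\in\gC\gR(\cald,J)$ satisfies $\phi^*\eta=f_\phi\,\eta$ with $f_\phi>0$, so the obstruction to invariance is entirely captured by the multiplier $f_\phi$; the goal is to normalize $\eta$ so that $f_\phi\equiv 1$.

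\medskip
\noindent\emph{Step 2: Use the structure of $\gc\gr(\cald,J)$ to locate a central Reeb field.}
For a strictly pseudoconvex CR manifold $\gC\gR(\cald,J)$ is a finite-dimensional Lie group by Chern--Moser \cite{ChMo74}. The multiplier $\phi\mapsto f_\phi$ defines a one-dimensional representation, and the dilations obstructing triviality of $f_\phi$ are exactly what occur for $\gH^{2n+1}$ and $S^{2n+1}$. The plan is to invoke the classical theorem of Schoen (building on Webster and Obata), which says that if $(M,\cald,J)$ is strictly pseudoconvex and $\gC\gR(\cald,J)$ does \emph{not} act properly, then $(M,\cald,J)$ is CR equivalent to $S^{2n+1}$ or $\gH^{2n+1}$ with their standard structures. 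Under our hypothesis these two cases are excluded, so $\gC\gR(\cald,J)$ acts properly on $M$; equivalently $\gC\gR(\cald,J)$ preserves a Riemannian metric and hence is compact (when $M$ is compact) or at least acts isometrically for some compatible metric.

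\medskip
\noindent\emph{Step 3: Average to kill the multiplier.}
Given properness, I would construct the invariant $\eta$ by averaging. Concretely, pick any Sasakian contact form $\eta_0$ compatible with $(\cald,J)$ (which exists since the structure is of Sasaki type) and let $G$ be a maximal compact subgroup, or the full group when it is compact, equipped with Haar measure. Set $\eta=\int_G \phi^*\eta_0\,d\phi$ after suitable normalization; positivity of each $f_\phi$ (Lemma~\ref{coorient}) guarantees $\eta$ is again a contact form with $\ker\eta=\cald$, and by construction $\eta$ is $G$-invariant. One then checks that the Reeb field $\xi$ of $\eta$ is fixed by $G$ and lies in the center of $\gc\gr(\cald,J)$, so that the associated Sasakian structure $\cals=(\xi,\eta,\Phi,g)$ has $\gA\gu\gt(\cals)$ containing all of $\gC\gR(\cald,J)$. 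The main obstacle is Step~2: one must correctly invoke the properness/compactness dichotomy (Schoen's theorem) and verify that the two excluded model cases in the statement are \emph{precisely} the non-proper cases, rather than a larger or smaller list; equally delicate is ensuring in Step~3 that the averaged form remains a \emph{contact} form and that its Reeb field is genuinely central, since averaging a priori only yields invariance under $G$ and one needs invariance under the full (possibly non-compact) $\gC\gR(\cald,J)$.
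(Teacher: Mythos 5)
Your Steps 1 and 2 match the paper's strategy: reduce the problem to producing a contact form invariant under the full group $G=\gC\gR(\cald,J)$, and invoke Schoen's theorem \cite{Sch95} to conclude that excluding the sphere and the Heisenberg group forces $G$ to act properly on $M$. But Step 3 contains a genuine gap, which you flag yourself without closing: averaging $\phi^*\eta_0$ over a maximal compact subgroup $K\subset G$ (or over Haar measure, which is a finite measure only when $G$ itself is compact) yields only $K$-invariance. When $M$ is non-compact, $G$ can perfectly well be a non-compact group acting properly --- properness of the action does not imply compactness of the group --- so your averaged $\eta$ is not invariant under elements of $G$ outside $K$, the multipliers $f_\phi$ have not been killed, and the argument stops exactly where the hard work begins. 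The suggestion that the Reeb field of the averaged form is ``central in $\gc\gr(\cald,J)$'' does not repair this: centrality would be a \emph{consequence} of full $G$-invariance, not a mechanism for producing it.

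The missing ingredient is the slice theorem for proper actions, and it is precisely how the paper handles the non-compact case (the compact case is Proposition 4.4 of \cite{BGS06}, where your Haar-averaging is legitimate). Properness gives, around each orbit, a local model $M\cong G\times_KV$ with $K$ a \emph{compact} isotropy group acting linearly on a slice $V$ (as in Lemma 2.6 of \cite{Ler02a}). Starting from a Sasakian form $\teta$ compatible with $(\cald,J)$, one averages its restriction to the slice over the compact group $K$ --- Lemma \ref{coorient} guarantees the multipliers are positive, so the average is still a contact form with kernel $\cald$ --- and then extends the result to all of $M$ by $G$-equivariance, using the $K$-action $(g,v)\mapsto(gk^{-1},k\cdot v)$. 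Invariance under all of $G$ is thus built in by construction rather than obtained by integrating over the possibly non-compact $G$, which is exactly the step your averaging cannot perform. With that substitution your outline becomes essentially the paper's proof; without it, Step 3 fails whenever $G$ is non-compact.
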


\begin{proof}
Clearly for any Sasakian structure $\cals$ with underlying CR structure $(\cald,J)$ we have $\gA\gu\gt(\cals)\subset \gC\gR(\cald,J),$ so it suffices to prove the inclusion $\gC\gR(\cald,J)\subset \gA\gu\gt(\cals)$ for some Sasakian structure $\cals$. For notational convenience we set $G=\gC\gR(\cald,J).$ The case of $M$ compact is Proposition 4.4 of \cite{BGS06}, so we confine ourselves to the non-compact case. By a theorem of Schoen \cite{Sch95} $G$ acts properly on $M$, so we can use a slice theorem as done in Lemma 2.6 of Lerman \cite{Ler02a} to obtain a 1-form $\eta$ with $\ker\eta=\cald$ such that $G\subset \gC\go\gn(M,\eta).$ Since $(M,\cald)$ is of Sasaki type, there exists a Sasakian structure $\tilde{\cals}=(\txi,\teta,\tPhi,\tg)$ with underlying CR structure $(\cald,J).$ Then $\cals^\phi=(\phi^{-1}_*\txi,\phi^*\teta,\phi^{-1}_*\tPhi\phi_*,\phi^*\tg)$ is Sasakian for all $\phi\in G.$ Now locally we can use the slice theorem to write $M=G\times_KV$ for some compact subgroup $K\subset G,$ and some representation $K\rightarrow GL(V)$ on the vector space $V.$ The action of $K$ on $G\times V$ is given by $(g,v)\mapsto (gk^{-1},k\cdot v),$ so we can average the restriction $\teta|_V$ over $K$ and extend it to all of $M$ by $G$-invariance. This gives a $G$-invariant 1-form $\eta$ belonging to a $G$-invariant Sasakian structure $\cals=(\xi,\eta,\Phi,g).$ So $G\subset \gA\gu\gt(\cals).$
\end{proof}

\begin{remark}\label{dilrem} {\rm The theorem does not apply when $M$ is the Heisenberg group $\gH^{2n+1}$. In this case $\gC\gR(\cald,J)$ contains the  dilation $(x_i,y_i,z)\mapsto (\grl x_i,\grl y_i,\grl^2z)$ which stabilizes the origin $(0,\cdots,0)$ in $\bbr^{2n+1}$. So the action of $\gC\gR(\cald,J)$ is not proper}.
\end{remark}

\section{The Heisenberg Group}

The $2n+1$ dimensional Heisenberg group $\gH^{2n+1}$ can be described by the group of $n+2$ by $n+2$ real matrices of the form
\begin{equation}\label{Heisgroup} 
{H(n)=\left\{\left(
\begin{array}{ccc}
1 &\bfx^t & z\\
0 & 1 & \bfy\\
0 & 0 & 1 \end{array}\right)\  ;\
\bfx,\bfy\in\bbr^n,z\in\bbr\right\}},
\end{equation}
As a manifold it is just $\bbr^{2n+1}.$
There are two natural isomorphic Sasakian structures on $\gH^{2n+1}$: the right invariant contact 1-form is $\eta^R=dz-\bfy\cdot d\bfx,$ and the left invariant contact form is $\eta^L=dz-\bfx\cdot d\bfy.$ These are related by the involution $\gri:\bbr^{2n+1}\ra{1.5} \bbr^{2n+1}$ defined by $\gri(\bfx,\bfy,z)=(\bfy,\bfx,z),$ that is, $\gri^*\eta^L=\eta^R.$ Notice that $\gri$ preserves orientation if $n$ is even, and reverses orientation if $n$ is odd. These contact forms give rise to the right invariant Sasakian structure $\cals^R=(\xi,\eta^R,\Phi^R,g^R)$ and left invariant Sasakian structure $\cals^L=(\xi,\eta^L,\Phi^L,g^L),$ where 
\begin{equation}\label{rsas}
\xi=\partial_z,~\Phi^R= \sum_i[(\partial_{x_i}+y_i\partial_z)\otimes dy_i -\partial_{y_i}\otimes dx_i],~g^R=d\bfx\cdot d\bfx+d\bfy\cdot d\bfy+(dz-\bfy\cdot d\bfx)^2,
\end{equation}
and 
\begin{equation}\label{lsas}
\xi=\partial_z,~\Phi^L= \sum_i[(\partial_{y_i}+x_i\partial_z)\otimes dx_i -\partial_{x_i}\otimes dy_i],~g^L=d\bfx\cdot d\bfx+d\bfy\cdot d\bfy+(dz-\bfx\cdot d\bfy)^2.
\end{equation}
Both Sasakian structures have the same Reeb vector field.
The corresponding contact bundles $\cald^R=\ker \eta^R$ and $\cald^L=\ker \eta^L$ are spanned by $\{U_i^R=\partial_{y_i},V_i^R=(\partial_{x_i}+y_i\partial_z)\}_{i=1}^n$ and $\{U_i^L=\partial_{x_i},V_i^L=(\partial_{y_i}+x_i\partial_z)\}_{i=1}^n,$ respectively. So we have isomorphic underlying CR structures $(\cald^R,J^R),$ and $(\cald^L,J^L)$ where as usual $J=\Phi|_\cald.$ Notice that $V_i^R,U_i^R,\xi$ span the Lie algebra $\gh_{2n+1}$ of the Heisenberg group. However, it is the Lie algebra associated with the {\it left} action, not the right. Likewise, $V_i^L,U_i^L,\xi$ span the Lie algebra $\gh_{2n+1}$ associated to the {\it right} action. So the Heisenberg group has what we can call a {\it bi-Sasakian} structure.

Let $(M,\cals)$ be a Sasakian manifold and let $\phi:M\ra{1.5} M$ be a diffeomorphism, then $(M,\cals^\phi)$ is an isomorphic Sasakian structure where $\phi^*\cals:=(\phi^{-1}_*\xi,\phi^*\eta,\phi^{-1}_*\Phi\phi_*,\phi^*g).$ Then we see that $\cals^L=\gri^*\cals^R.$ Now let us fix one of these structures, namely, the right Sasakian structure $\cals^R$ with its underlying CR structure $(\cald^R,J^R).$ However, for ease of notation I will often drop the superscript $R,$ and refer to this as the {\it standard} Sasakian or CR structure on $\gH^{2n+1}.$ There is also another frequently used model of the standard CR structure on $\gH^{2n+1}$ which is intermediate to the right and left ones, but explicitly exhibits the complex structure on the transverse space. It is defined by the contact 1-form $\eta=dz-2\sum_j(\bfx\cdot d\bfy-\bfy\cdot d\bfx),$ and gives an equivalent Sasakian structure which is invariant under the group action $(\bfz,z)\cdot (\boldsymbol{\zeta},c)=(\bfz +\boldsymbol{\zeta},z+c+2{\rm Im}(\bar{\boldsymbol{\zeta}}\cdot \bfz)$.

\section{The Group of CR Transformations of $\gH^{2n+1}$}

The group of CR automorphisms of the standard strictly pseudoconvex CR structure on $\gH^{2n+1}$ is well-known \cite{Tol78,Fol89}.
Actually, we compute the Lie algebra $\gc\gr(\gH^{2n+1};\cald,J)$. Of course, this gives the component $\gC\gR(\gH^{2n+1};\cald,J)_0$ of $\gC\gR(\gH^{2n+1};\cald,J)$  connected to the identity. We have

\begin{theorem}\label{HeisCR}
The Lie algebra $\gc\gr(\gH^{2n+1};\cald,J)$ of infinitesimal CR transformations for the standard (right) CR structure on $\gH^{2n+1}$ is spanned by the vector fields:
$$\xi=\partial_z,~R_i=\partial_{x_i},~S_i=\partial_{y_i}+x_i\partial_z,~X_{ij}=x_i\partial_{y_j}+ x_j\partial_{y_i} -y_i\partial_{x_j} -y_j\partial_{x_i}+(x_ix_j-y_iy_j)\partial_z$$ 
$$Y_{ij}=x_i\partial_{x_j}-x_j\partial_{x_i}+y_i\partial_{y_j}-y_j\partial_{y_i}, ~D=2z\partial_z+ \sum_k(x_k\partial_{x_k}+y_k\partial_{y_k}).$$ 
\end{theorem}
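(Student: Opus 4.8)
The plan is to reduce the two defining conditions in \eqref{infcrtrans} to a single scalar PDE for a generating function and then solve that PDE. I would write an arbitrary vector field as $X=a\partial_z+\sum_i b_i\partial_{x_i}+\sum_i c_i\partial_{y_i}$ and set $h=\eta(X)$, where $\eta=dz-\bfy\cdot d\bfx$. First I would dispose of the contact condition $[X,\cald]\subset\cald$: since $\cald=\ker\eta$, this is equivalent to $\pounds_X\eta=\lambda\eta$ for some function $\lambda$, and evaluating $\pounds_X\eta=dh+\iota_Xd\eta$ (with $d\eta=\sum_i dx_i\wedge dy_i$) shows that $X$ is completely determined by its generating function $h$ via $b_i=-h_{y_i}$, $c_i=h_{x_i}+y_ih_z$ and $\lambda=h_z$. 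Thus infinitesimal contact transformations are in bijection with smooth functions $h$, and in the frame $\xi,V_i,U_i$ one has $X=h\xi-\sum_i h_{y_i}V_i+\sum_i(h_{x_i}+y_ih_z)U_i$.

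Next I would impose $\pounds_XJ=0$. Working in the complex frame $W_j=V_j+iU_j=\partial_{x_j}+i\partial_{y_j}+y_j\partial_z$ spanning the $(1,0)$-bundle $T^{1,0}$, the CR condition (granted $[X,\cald]\subset\cald$) is equivalent to $[X,W_j]\in\Gamma(T^{1,0})$ for every $j$. Computing this bracket and projecting onto $T^{0,1}$, I expect the $\bar W_k$-component of $[X,W_j]$ to be a nonzero constant multiple of $W_jW_kh$ (explicitly $-\tfrac{i}{2}W_jW_kh$, where $W_j$ is now read as a first-order operator on functions). Hence $X$ is an infinitesimal CR transformation if and only if
\begin{equation*}
W_jW_kh=0,\qquad 1\le j,k\le n .
\end{equation*}
A direct check then confirms that the six families in the statement correspond to the generating functions $h=1$; $h=-y_i$; $h=x_i$; $h=x_ix_j+y_iy_j=\mathrm{Re}(w_i\bar w_j)$; $h=x_jy_i-x_iy_j=\mathrm{Im}(w_i\bar w_j)$; and $h=2z-\bfx\cdot\bfy$, where $w_j=x_j+iy_j$, and that each solves $W_jW_kh=0$ using $W_jw_k=0$ and $W_j\bar w_k=2\delta_{jk}$.

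To prove these exhaust the solution space I would use the weight grading in which $x_i,y_i$ have weight $1$ and $z$ has weight $2$; then each $W_j$ is homogeneous of weight $-1$, so the system is weight-homogeneous and it suffices to treat weighted-homogeneous $h$ degree by degree. The dilation $D$ lies in $\gc\gr(\gH^{2n+1};\cald,J)$ and $\mathrm{ad}_D$ acts on solutions as the (shifted) Euler operator of this grading, which guarantees that every solution splits into weighted-homogeneous pieces. In weights $0$, $1$ and $2$ a finite linear computation recovers exactly the functions above, giving the $1+2n+(n^2+1)=n^2+2n+2$ listed generators.

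The crux, and the step I expect to be the main obstacle, is showing that there are no weighted-homogeneous solutions of weight $\ge 3$. Setting $F_k=W_kh$, the equations say each $F_k$ is annihilated by all $W_j$; the difficulty is that such ``CR-antiholomorphic'' functions are abundant locally (all holomorphic polynomials in the $w_j$, for instance), so the first-order condition alone does not bound the weight. I would instead argue by descent on the $z$-degree: expanding $h=\sum_m z^mq_m(\bfx,\bfy)$ and using the explicit form of $W_jW_kh$, the equations force $\partial_{\bar w_j}\partial_{\bar w_k}q_m=0$ modulo coupling to lower-order coefficients, and feeding these back (together with the conjugate equations $\bar W_j\bar W_kh=0$ that follow from the reality of $h$) should progressively annihilate the higher-weight terms. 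Conceptually this is the assertion that the remaining part $\mathfrak{g}_1\oplus\mathfrak{g}_2$ of the CR algebra $\gs\gu(n+1,1)$ of the sphere consists of the ``inversions,'' whose generating vector fields are singular on $\bbr^{2n+1}$ and hence disappear once global smoothness is demanded; making this exclusion effective directly from the PDE is where the real work lies.
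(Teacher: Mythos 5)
Your reduction to the generating function $h=\eta(X)$ and your identification of the CR condition with the second--order system $W_jW_kh=0$ (including the constant $-\tfrac{i}{2}$) are correct, and up to that point you follow essentially the same route as the paper, which parametrizes contact fields by a Hamiltonian $F$ via Equation (\ref{infcon2}) and then imposes a Lie--derivative condition. But the step you flagged as the crux is not merely hard --- it is false as you have set it up. The system $W_jW_kh=0$ \emph{does} admit smooth weighted--homogeneous solutions of weight $3$ and $4$: for $n=1$ take $h=x^3+3xy^2-4yz$; then $Wh=3x^2-y^2+6ixy-4iz$, one checks $W^2h=0$, and the corresponding vector field
\begin{equation*}
X=(4z-6xy)\,\partial_x+(3x^2-y^2)\,\partial_y+(x^3-3xy^2)\,\partial_z
\end{equation*}
is polynomial, hence smooth on all of $\bbr^3$, satisfies $[X,\cald]\subset\cald$ and $\pounds_XJ=0$, and lies outside the span of the listed fields (none of them contains a $z\partial_x$ term). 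Your conceptual escape hatch --- that the $\gg_1\oplus\gg_2$ part of $\gs\gu(n+1,1)$ consists of inversions whose generators are singular on $\bbr^{2n+1}$ --- is wrong: exactly as for special conformal Killing fields on $\bbr^n$, it is the \emph{finite} inversions that fail to be globally defined, while their infinitesimal generators have polynomial coefficients and are everywhere smooth; only their flows are incomplete (on the invariant line $x=z=0$ the field above restricts to $-y^2\partial_y$, which blows up in finite time). So your descent argument, carried out honestly, would \emph{discover} these $2n+1$ extra solutions rather than exclude them, and with the hypotheses exactly as in (\ref{infcrtrans}) the solution space is the full $\gs\gu(n+1,1)$, of dimension $n^2+4n+3$, not $n^2+2n+2$.

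What closes this gap in the paper's own computation is that its proof imposes $\pounds_X\Phi^R=0$ for the \emph{full} endomorphism $\Phi^R$ of $T\gH^{2n+1}$, which is strictly stronger than the condition $\pounds_XJ=0$ on $\cald$ that you (faithfully following (\ref{infcrtrans})) used. Applying $\pounds_X\Phi^R$ to $\xi$ gives $\Phi^R([X,\xi])=0$, i.e.\ $[X,\xi]$ proportional to $\xi$; since $[\xi,X_h]=X_{h_z}$ in the Hamiltonian correspondence, this forces $U_ih_z=V_ih_z=0$ for all $i$, hence $h_z=\mathrm{const}$ by the bracket--generating property of $\cald$. Adjoining $\xi h=\mathrm{const}$ to your system kills precisely the weight--$3$ and weight--$4$ solutions (your counterexample has $h_z=-4y$) and leaves exactly the six families you listed. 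So to repair your argument you must either add this extra scalar equation --- accepting that the theorem's $\pounds_XJ=0$ is to be read as $\pounds_X\Phi=0$ --- or else restrict from the start to \emph{complete} vector fields, i.e.\ compute the Lie algebra of the group $\gC\gR(\gH^{2n+1};\cald,J)$ rather than the algebra defined by (\ref{infcrtrans}): a global CR automorphism of $\gH^{2n+1}$ extends to $S^{2n+1}$ fixing the point at infinity, so its generator lies in the parabolic subalgebra of $\gs\gu(n+1,1)$, which is exactly the $(n^2+2n+2)$--dimensional span in the statement. Either repair is fine; your current plan, which is to exclude the high weights from the PDE $W_jW_kh=0$ alone, cannot succeed.
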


\begin{proof}
The procedure is well known. First, since the vector fields leave $\eta^R$ invariant they take the form 
\begin{equation}\label{infcon2}
X=\biggr(F-\sum_{i=1}^n y_i\frac{\partial F}{\partial
y_i}\biggl)\frac{\partial}{\partial z} -\sum_{i=1}^n
\frac{\partial F}{\partial y_i}\frac{\partial}{\partial x_i}
+\sum_{i=1}^n\biggl(y_i\frac{\partial F}{\partial z}+
\frac{\partial F}{\partial x_i}\biggl) \frac{\partial}{\partial
y_i}\,,
\end{equation}
where $F$ is an arbitrary smooth function on $\bbr^{2n+1}$ known as a {\it
Hamiltonian} function for the infinitesimal contact transformation
$X.$ We then demand that $X$ satisfy $\pounds_X\Phi^R=0$ where $\Phi^R$ is given by Equation (\ref{rsas}). This gives a system of PDE's for $F$ which is straightforward to solve.

\end{proof}

Note that $(R_i,S_j,\xi)$ span the Lie algebra $\gh^{2n+1}$ of the Heisenberg group and that $(R_i,S_i)=(U_i^L,V_i^L).$ Moreover, $(X_{ij},Y_{ij})$ span the Lie algebra $\gu(n)$ of the unitary group $U(n).$ The Heisenberg algebra $\gh^{2n+1}$ is an ideal in $\gc\gr(\gH^{2n+1};\cald,J)$ with quotient algebra $\gu(n)\oplus \bbr$ where $\bbr$ is generated by $D.$ Summarizing we have

\begin{lemma}\label{crheis}
There is an exact sequence of Lie algebras
$$0\ra{1.8}\gh^{2n+1}\ra{1.8} \gc\gr(\gH^{2n+1};\cald,J)\ra{1.8} \gu(n)\oplus \bbr \ra{1.8} 0.$$
\end{lemma}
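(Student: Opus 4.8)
The lemma repackages bracket relations already encoded in the generators of Theorem \ref{HeisCR}, so the plan is to verify three things: that $\gh:=\mathrm{span}\{\xi,R_i,S_i\}$ is a subalgebra isomorphic to the Heisenberg algebra, that it is an ideal in $\gc\gr(\gH^{2n+1};\cald,J)$, and that the quotient is $\gu(n)\oplus\bbr$. First I would compute the brackets among $\xi,R_i,S_i$ directly. Using $R_i=\partial_{x_i}$ and $S_j=\partial_{y_j}+x_j\partial_z$ one finds $[R_i,S_j]=\delta_{ij}\partial_z=\delta_{ij}\xi$, while $[R_i,R_j]=[S_i,S_j]=0$ and $\xi=\partial_z$ is central. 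These are precisely the defining relations of $\gh^{2n+1}$ with $\xi$ spanning the one-dimensional center, so the inclusion $\gh^{2n+1}\hookrightarrow\gc\gr(\gH^{2n+1};\cald,J)$ is an injective Lie algebra homomorphism, yielding exactness on the left.

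Next I would establish that $\gh^{2n+1}$ is an ideal by bracketing the remaining generators $X_{ij},Y_{ij},D$ against it. For $D$ the computation is a weight count for the anisotropic dilation of weights $(1,1,2)$ on $(x,y,z)$: I expect $[D,R_i]=-R_i$, $[D,S_i]=-S_i$ and $[D,\xi]=-2\xi$, all lying in $\gh^{2n+1}$. For $X_{ij}$ and $Y_{ij}$ the horizontal parts realize the standard action of $\gu(n)$ on $\mathrm{span}\{R_i,S_i\}\cong\bbc^n$, while their $\partial_z$-coefficients contribute only multiples of $\xi$; the needed checks are $[X_{ij},\xi]=[Y_{ij},\xi]=0$ together with $[X_{ij},R_k],[X_{ij},S_k],[Y_{ij},R_k],[Y_{ij},S_k]\in\mathrm{span}\{R_\ell,S_\ell,\xi\}$. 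These are routine vector-field computations and all land in $\gh^{2n+1}$, so it is an ideal and the quotient map is a homomorphism with kernel exactly $\gh^{2n+1}$.

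It remains to identify the quotient. A dimension count gives $\dim\gc\gr(\gH^{2n+1};\cald,J)-\dim\gh^{2n+1}=(2n+1+n^2+1)-(2n+1)=n^2+1=\dim(\gu(n)\oplus\bbr)$. To match the Lie structure I would use the real decomposition $\gu(n)=\mathfrak{so}(n)\oplus\sqrt{-1}\,\mathrm{Sym}(n,\bbr)$: the antisymmetric $Y_{ij}$ with $i<j$ span the $\mathfrak{so}(n)$ summand and the symmetric $X_{ij}$ with $i\le j$ span $\sqrt{-1}\,\mathrm{Sym}(n,\bbr)$, together $n^2$ generators whose brackets close modulo $\gh^{2n+1}$ with the structure constants of $\gu(n)$. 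The weight-zero computations $[D,X_{ij}]=[D,Y_{ij}]=0$ hold exactly, so the image of $D$ spans an $\bbr$ commuting with this $\gu(n)$, and the quotient is the direct sum $\gu(n)\oplus\bbr$, giving exactness on the right.

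The main obstacle is this last identification: bracketing the vector fields is mechanical, but confirming that $\mathrm{span}\{X_{ij},Y_{ij}\}$ reproduces the bracket relations of $\gu(n)$ --- rather than some other real Lie algebra of the same dimension --- requires careful bookkeeping of the $\partial_z$-terms, which survive only as central $\xi$-contributions that vanish in the quotient, together with an explicit matching against $\mathfrak{so}(n)\oplus\sqrt{-1}\,\mathrm{Sym}(n,\bbr)$.
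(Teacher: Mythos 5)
Your proposal is correct and takes essentially the same approach as the paper, which simply records after Theorem \ref{HeisCR} that $(\xi,R_i,S_i)$ span $\gh^{2n+1}$, that $(X_{ij},Y_{ij})$ span $\gu(n)$, and that $\gh^{2n+1}$ is an ideal with quotient $\gu(n)\oplus\bbr$, the $\bbr$ generated by $D$. Your explicit bracket computations (e.g. $[R_i,S_j]=\delta_{ij}\xi$, $[D,R_i]=-R_i$, $[D,X_{ij}]=[D,Y_{ij}]=0$) correctly supply the routine verifications the paper leaves implicit.
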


From this we have

\begin{theorem}\label{CRHeisgrp}
The group $\gC\gR(\gH^{2n+1};\cald,J)_0$ is isomorphic to the semi-direct product $(U(n)\times \bbr^+)\ltimes \gH^{2n+1}.$ Moreover, the connected component of the group of automorphisms, $\gA\gu\gt(\cals)_0$ of the standard Sasakian structure on $\gH^{2n+1}$, is isomorphic to the semi-direct product $U(n)\ltimes \gH^{2n+1}.$
\end{theorem}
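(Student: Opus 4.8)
The plan is to exponentiate the Lie algebra description given in Theorem \ref{HeisCR} and Lemma \ref{crheis}, matching the abstract exact sequence of Lie algebras to an explicit semidirect product of groups. First I would identify the three distinguished sub-objects inside $\gc\gr(\gH^{2n+1};\cald,J)$: the ideal $\gh^{2n+1}$ spanned by $(R_i,S_i,\xi)$, the copy of $\gu(n)$ spanned by $(X_{ij},Y_{ij})$, and the one-dimensional dilation algebra $\bbr D$. Exponentiating $\gh^{2n+1}$ recovers the Heisenberg group $\gH^{2n+1}$ itself, acting on $\bbr^{2n+1}$ by left translations. Exponentiating the $\gu(n)$ generators gives a $U(n)$ acting by the standard linear unitary action on the $(\bfx,\bfy)$-variables (viewed as $\bbc^n$ via $\bfx+i\bfy$), fixing $z$; and $\exp(tD)$ is precisely the one-parameter group of dilations $(\bfx,\bfy,z)\mapsto(\gre^t\bfx,\gre^t\bfy,\gre^{2t}z)$ appearing in Remark \ref{dilrem}, which integrates to $\bbr^+$.

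Next I would verify the semidirect-product structure at the group level. Since $\gh^{2n+1}$ is an ideal (Lemma \ref{crheis}), the corresponding normal subgroup $\gH^{2n+1}$ is normal in $\gC\gR(\gH^{2n+1};\cald,J)_0$, so the group is a semidirect product $Q\ltimes \gH^{2n+1}$ where $Q$ is the subgroup generated by the $U(n)$ and the dilations. The key point is that the $U(n)$ and dilation actions commute: the unitary rotations and the radial scalings are independent transformations of $\bbr^{2n+1}=\bbc^n\times\bbr$, so $Q\cong U(n)\times\bbr^+$ as a direct product. This gives the first isomorphism $\gC\gR(\gH^{2n+1};\cald,J)_0\cong (U(n)\times\bbr^+)\ltimes\gH^{2n+1}$. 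To make this rigorous one checks that the adjoint action of the $X_{ij},Y_{ij},D$ on the Heisenberg generators, computed from the explicit bracket relations among the vector fields in Theorem \ref{HeisCR}, matches the differential of the claimed linear-plus-scaling action, and that the exponentials generate exactly the stated groups with no extra identifications (the connectedness and simple connectivity of each factor make this clean).

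For the second statement, I would invoke the characterization $\gA\gu\gt(\cals)=\{\phi\in\gC\gR(\cald,J)\mid\phi^*\eta=\eta\}$ established earlier in the excerpt. Passing to Lie algebras, the automorphism subalgebra consists of those $X\in\gc\gr(\gH^{2n+1};\cald,J)$ with $\pounds_X\eta^R=0$. Among the generators, the Heisenberg fields and the $\gu(n)$-fields preserve $\eta^R$ (they are infinitesimal isometries of $g^R$), while the dilation $D$ does not: one computes $\pounds_D\eta^R=2\eta^R$, so $D$ scales the contact form rather than preserving it. Hence the automorphism subalgebra is exactly $\gh^{2n+1}\oplus\gu(n)$, and dropping the $\bbr^+$ factor from the previous paragraph yields $\gA\gu\gt(\cals)_0\cong U(n)\ltimes\gH^{2n+1}$.

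The main obstacle I anticipate is not any single hard computation but the bookkeeping needed to confirm that the abstract splitting of Lie algebras lifts faithfully to the claimed global semidirect and direct product structure — specifically, checking that the $U(n)$ and $\bbr^+$ subgroups intersect trivially and commute, and that their joint action on $\gH^{2n+1}$ is by genuine CR-automorphisms with the indicated conjugation formulas. This amounts to the (routine but slightly delicate) verification that the explicit one-parameter subgroups generated by $X_{ij},Y_{ij},D$ close up into $U(n)$ and $\bbr^+$ exactly, which is where the explicit form of the generators in Theorem \ref{HeisCR} does the real work.
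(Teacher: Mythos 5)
Your outline coincides with the paper's own proof: the paper likewise proves the theorem by integrating the Lie algebra of Theorem \ref{HeisCR} using the decomposition of Lemma \ref{crheis}, and it disposes of the second statement exactly as you do, by observing that $D$ is the only generator that fails to preserve $\cals^R$ (your computation $\pounds_D\eta^R=2\eta^R$ makes this precise and is correct, as is your identification of the automorphism algebra with $\gu(n)\ltimes\gh^{2n+1}$). However, there is one concrete error in your argument, and it sits exactly at the point that carries the entire weight of the paper's proof. You assert that exponentiating the $\gu(n)$ generators yields the standard linear unitary action on the $(\bfx,\bfy)$-variables fixing $z$. That is false in the right-invariant model in which Theorem \ref{HeisCR} is stated: the fields $X_{ij}$ carry the quadratic term $(x_ix_j-y_iy_j)\partial_z$, so they are not linear vector fields and their flows do move $z$. (Your description would be correct in the intermediate model $\eta=dz-2\sum_j(\bfx\cdot d\bfy-\bfy\cdot d\bfx)$ of Section 4, where the form is invariant under the linear $U(n)$ action, but $\eta^R=dz-\bfy\cdot d\bfx$ is not; the quadratic $z$-correction in $X_{ij}$ is precisely what compensates for this.) The two items you defer as ``routine but slightly delicate bookkeeping'' --- completeness of the flows and the closing up of the one-parameter subgroups into $U(n)$ --- are nontrivial only because of this term, and they constitute essentially the whole of the paper's proof: the paper notes that linear vector fields on $\bbr^{2n+1}$ are automatically complete, so the only thing left to check is the $X_{ij}$, and it integrates the flow of $X_{12}$ explicitly, finding trigonometric solutions. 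Completeness follows, and the $z$-component is shifted by a periodic quantity whose mean over the rotation period vanishes, so the orbits close and the generated subgroup is a genuine $U(n)$ rather than its universal cover.

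The repair is easy but must be made. Either pass first to the intermediate model by an explicit CR equivalence, after which your ``linear action fixing $z$'' argument is literally valid, or integrate the flows of the $X_{ij}$ directly as the paper does. Relatedly, your justification that $U(n)$ and $\bbr^+$ commute (``independent transformations of $\bbc^n\times\bbr$'') leans on the false linear picture; the correct reason in this model is that $[D,X_{ij}]=[D,Y_{ij}]=0$ because $X_{ij}$ and $Y_{ij}$ are homogeneous of weighted degree zero under the anisotropic dilations generated by $D$ (degree-one coefficients on $\partial_{x_k},\partial_{y_k}$ and a degree-two coefficient on $\partial_z$), which one checks directly from the formulas in Theorem \ref{HeisCR}. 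With these two substitutions your proof goes through and matches the paper's.
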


\begin{proof}
By Lemma \ref{crheis} it suffices to show that the vector fields of Theorem \ref{HeisCR} are complete. Since linear vector fields on $\bbr^{2n+1}$ are complete, we only need check this for $X_{ij}.$ Since these vector fields all have the same form, it suffices to look at one case, for example $X_{12}=x_1\partial_{y_2}+ x_2\partial_{y_1} -y_1\partial_{x_2} -y_2\partial_{x_1}+(x_1x_2-y_1y_2)\partial_z.$ The integral curves satisfy
$$\frac{dy_2}{dt}=x_1,~\frac{dy_1}{dt}=x_2,\frac{dx_2}{dt}=-y_1,\frac{dx_1}{dt}=-y_2, ~\frac{dz}{dt}=x_1x_2-y_1y_2.$$
The solutions are sines and cosines and thus defined for all $t.$ This proves the first statement. The only element of $\gc\gr(\gH^{2n+1};\cald,J)$ that does not leave $\cals^R$ invariant is $D$ which generates the dilation group $\bbr^+$ which proves the second statement. 
\end{proof}

Although I have not determined the full automorphism group $\gA\gu\gt(\cals),$ it is easy to see that  there are some discrete elements that are not in $\gA\gu\gt(\cals)_0$. For each $i=1,\cdots, n$ the map $\grs_i$ defined by sending $(x_i,y_i)$ to $(-x_i,-y_i)$ and leaving all other coordinates unchanged is in $\gA\gu\gt(\cals)\setminus \gA\gu\gt(\cals)_0.$ However, the product $\grs_i\grs_j$ is in $\gA\gu\gt(\cals)_0,$ since it can be realized by a rotation generated by $Y_{ij}.$

Now Schoen \cite{Sch95} proved that the group $\gC\gR(\gH^{2n+1};\cald,J)$ acts properly on a strictly pseudoconvex CR manifold $M$ unless $M$ is the Heisenberg group $\gH^{2n+1}$ or the sphere $S^{2n+1}$ with their standard CR structures. In the case of $\gH^{2n+1}$ we see that the group $\gC\gR(\gH^{2n+1};\cald,J)$ does not act properly, since the isotropy subgroup of the origin is the dilatation group $\bbr^+$. It can be shown, however, that the group $\gA\gu\gt(\cals)_0$ does act properly on $\gH^{2n+1}.$

\section{The Sasaki and Extremal Cones of $(\gH^{2n+1};\cald,J)$}

The maximal dimension of a maximal Abelian subalgebra of $\gc\gr(\gH^{2n+1};\cald,J)$ is $n+1$. This can be seen from the decomposition $\gc\gr(\gH^{2n+1};\cald,J)=(\gu(n)\oplus \bbr)\ltimes \gh^{2n+1}$.

\begin{lemma}\label{sasconeHeis}
The unreduced Sasaki cone $\ga^+$ of $\gH^{2n+1}$ is determined by the maximal Abelian subalgebra $\ga$ spanned by the basis $\{\xi,X_{11},\cdots,X_{nn}\}$, and is given by $a>0$ and $b_i\leq 0$ where $(a,b_1,\cdots,b_n)$ are coordinates of $\ga$ with respect to this basis. 
\end{lemma}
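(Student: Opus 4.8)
The plan is to prove the statement in three stages: first show that $\ga=\mathrm{span}_\bbr\{\xi,X_{11},\dots,X_{nn}\}$ is a maximal Abelian subalgebra of $\gc\gr(\gH^{2n+1};\cald,J)$ of Reeb type; then compute the positivity condition defining $\gc\gr^+(\cald,J)$ on $\ga$ to cut out $\ga^+$; and finally verify that this $\ga^+$ is maximal in the sense used to define the unreduced Sasaki cone.

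First I would check that $\ga$ is Abelian. Since the coefficients of each $X_{ii}$ are independent of $z$ we have $[\xi,X_{ii}]=0$, and for $i\neq j$ the fields $X_{ii}$ and $X_{jj}$ involve the disjoint coordinate pairs $(x_i,y_i)$ and $(x_j,y_j)$ (their only shared variable $z$ entering no coefficient), so $[X_{ii},X_{jj}]=0$. As $\dim\ga=n+1$ equals the maximal dimension of a maximal Abelian subalgebra noted above, it then suffices to prove that $\ga$ is its own centralizer. For this I use the decomposition $\gc\gr(\gH^{2n+1};\cald,J)=(\gu(n)\oplus\bbr)\ltimes\gh^{2n+1}$ of Lemma~\ref{crheis}: the $X_{ii}$ span a (self-centralizing) Cartan subalgebra of $\gu(n)$; the generator $D$ cannot be adjoined since $[D,\xi]=-2\xi\neq0$; and the brackets $[X_{ii},R_i]=-2S_i$, $[X_{ii},S_i]=2R_i$ (with $[X_{ii},R_j]=[X_{ii},S_j]=0$ for $j\neq i$) show that $\mathrm{ad}_{X_{ii}}$ is a nondegenerate rotation of the plane $\mathrm{span}\{R_i,S_i\}$. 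Hence no nonzero element of the horizontal part of $\gh^{2n+1}$ centralizes all of $\ga$, so the centralizer of $\ga$ meets $\gh^{2n+1}$ only in $\bbr\xi$. This makes $\ga$ maximal Abelian, and it is of Reeb type because $\xi\in\ga$.

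Next I would write a general element as $\xi'=a\xi+\sum_i b_iX_{ii}$ and evaluate the reference contact form $\eta^R=dz-\bfy\cdot d\bfx$ on it. A direct calculation gives $\eta^R(\xi)=1$ and shows that $\eta^R(X_{ii})$ is a nonnegative quadratic function of $(x_i,y_i)$ vanishing exactly on $\{x_i=y_i=0\}$, so that
\begin{equation*}
\eta^R(\xi')=a+\sum_i b_i\,\eta^R(X_{ii})
\end{equation*}
as a function on $\bbr^{2n+1}$. By definition $\xi'\in\gc\gr^+(\cald,J)$ precisely when this is positive at every point. Evaluating at the origin forces $a>0$, and examining the growth as $x_i^2+y_i^2\to\infty$ with the remaining coordinates fixed determines the admissible sign of each $b_i$; carrying this out gives exactly the region $\ga^+$ of the statement. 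I would also note that the inequalities on the $b_i$ are non-strict precisely because $\eta^R(X_{ii})$ is only semidefinite (it vanishes on a subvariety), which is the same mechanism that prevents $\gc\gr^+(\cald,J)$ from being open in the noncompact setting.

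Finally, for the maximality of $\ga^+$ I would observe that $\ga^+$ is a full-dimensional cone in $\ga$, so $\mathrm{span}_\bbr(\ga^+)=\ga$. If $\ga^+\subset\ga_1^+$ for some Abelian $\ga_1$, then $\ga=\mathrm{span}_\bbr(\ga^+)\subset\ga_1$; since $\ga$ is maximal Abelian this forces $\ga_1=\ga$ and hence $\ga_1^+=\ga^+$, which is exactly maximality. I expect the main obstacle to be the centralizer computation of the second paragraph---pinning down the centralizer of $\ga$ inside the semidirect factor $\gh^{2n+1}$---together with getting the strict-versus-non-strict inequalities right in the third; the positivity evaluation and the concluding cone-maximality argument should then be routine.
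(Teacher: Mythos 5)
Your overall route is the same as the paper's: exhibit $\ga=\mathrm{span}\{\xi,X_{11},\cdots,X_{nn}\}$ as a maximal Abelian subalgebra, impose positivity of $\eta^R$ on a general element $a\xi+\sum_ib_iX_{ii}$ to cut out $\ga^+$, and verify maximality of $\ga^+$. Your bracket computations are correct ($[D,\xi]=-2\xi$, $[X_{ii},R_i]=-2S_i$, $[X_{ii},S_i]=2R_i$), and your centralizer argument together with the final span argument ($\ga^+$ is full-dimensional, so $\ga^+\subset\ga_1^+$ forces $\ga\subset\ga_1$, hence equality by maximal Abelianness) is actually tighter than the paper's ``it is easy to check that this is maximal'' and ``it can be checked that, up to conjugacy, $\ga_1^+$ of any other maximal Abelian subalgebra is contained in $\ga^+$.'' Note only that the paper's last claim is a stronger essential-uniqueness statement (any other positive part is conjugate into this one, justifying ``the'' unreduced Sasaki cone), which your argument does not address; the bare maximality required by the definition, however, is exactly what you prove.

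The one genuine snag is the sign at the positivity step, where your write-up contradicts itself. With the normalization of Theorem \ref{HeisCR}, $X_{ii}=2x_i\partial_{y_i}-2y_i\partial_{x_i}+(x_i^2-y_i^2)\partial_z$, and a direct evaluation gives
\begin{equation*}
\eta^R(X_{ii})=(x_i^2-y_i^2)-y_i(-2y_i)=x_i^2+y_i^2\geq 0,
\end{equation*}
which is precisely your ``nonnegative quadratic vanishing exactly on $\{x_i=y_i=0\}$.'' But then positivity of $a+\sum_ib_i(x_i^2+y_i^2)$ on all of $\bbr^{2n+1}$ forces $b_i\geq 0$, not the $b_i\leq 0$ of the statement, so your sentence ``carrying this out gives exactly the region $\ga^+$ of the statement'' cannot follow from your own (correct) intermediate claim. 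The mismatch originates in the paper itself: Equation (\ref{Reebvect}) uses $\eta(X_{ii})=-\tfrac{1}{2}(x_i^2+y_i^2)$, which is consistent with the later formulas $\xi_\bfa=\xi-2\sum_ia_iX_{ii}$, $a_i=-\tfrac{1}{2}b_i$, and $\eta_\bfa=\eta/(1+\sum_ia_i(x_i^2+y_i^2))$, but is off by a factor of $-2$ from the $X_{ii}$ of Theorem \ref{HeisCR} (via Equation (\ref{infcon2}) the Hamiltonian of $X_{ii}$ is $F=x_i^2+y_i^2$, whereas $F=-\tfrac{1}{2}(x_i^2+y_i^2)$ generates $-\tfrac{1}{2}X_{ii}$). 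The geometric content is unaffected either way --- the cone is a ray times a closed orthant, closed precisely because the quadratics are only semidefinite, which is also the correct explanation you give for $\gc\gr^+(\cald,J)$ failing to be open here --- but a consistent write-up must either renormalize $X_{ii}$ to match Equation (\ref{Reebvect}) or conclude $b_i\geq 0$; as drafted, your proof ends in a contradiction with the inequality it set out to prove, and you should flag the normalization explicitly rather than defer it.
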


\begin{proof}
To satisfy positivity $\ga$ must contain $\xi,$ and any other element of $\gh^{2n+1}$ will not be positive. Moreover, any maximal Abelian subalgebra of $\gu(n)$ is conjugate to that spanned by $\{X_{11},\cdots,X_{nn}\}$, so up to conjugacy we obtain the Abelian algebra $\ga.$ It is easy to check that this is maximal. Then on the algebra $\ga$ we have
\begin{equation}\label{Reebvect}
0<\eta(a\xi+\sum_ib_iX_{ii})=a-\frac{1}{2}\sum_ib_i\bigl(x_i^2+y_i^2\bigr),
\end{equation}
which is positive on all of $\gH^{2n+1}$ if and only if $a>0$ and $b_i\leq 0.$ Furthermore, it can be checked that, up to conjugacy, $\ga_1^+$ of any other maximal Abelian subalgebra $\ga_1$ is contained in $\ga^+.$ 
\end{proof}

As mentioned previously, we have an identification $a^+={\mathcal S}(\cald,J),$ and we denote an element of ${\mathcal S}(\cald,J)$ by $\cals_{a,\bfb}$ where $\bfb=(b_1,\cdots,b_n)$ denote the coordinates of a maximal Abelian subalgebra $\ga$ of $\gu(n).$  From the action of the dilatation subgroup of $\gC\gR(\gH^{2n+1};\cald,J)$ we can put $a=1$, and by the action of $U(n)$ we choose a maximal torus $T^n.$ The normalizer of $T^n$ is the Weyl group of $U(n)$ which is isomorphic to the symmetric group $\grS_n$, so any element of ${\mathcal S}(\cald,J)$ can be brought to the form $\cals_{1,\bfb}$ with the ordered weights. So the Sasaki cone $\grk(\gH^{2n+1};\cald,J)={\mathcal S}(\cald,J)/\gC\gR(\gH^{2n+1};\cald,J)$ associated to the standard CR structure $(\cald,J)$ on $\gH^{2n+1}$ is the subset of $\bbr^n$ defined by $b_n\leq \cdots\leq b_1\leq 0$. It is convenient to set $a_i=-\frac{1}{2}b_i.$ We are now ready for 

\begin{theorem}\label{sasex}
Let $(\cald,J)$ be the standard CR structure on the Heisenberg group $\gH^{2n+1}$, and let $\grk(\cald,J)$ and $\ge(\cald,J)$ denote the Sasaki cone and the extremal Sasaki set on $\gH^{2n+1}$, respectively. Then 
\begin{enumerate}
\item the Sasaki cone $\grk(\cald,J)$ can be identified with set of non-negative ordered $n$-tuples $0\leq a_1\leq \cdots\leq a_n$;  
\item the standard Sasakian structure $\cals_{1,0}$ is the only one with constant scalar curvature, and is  {\it strongly extremal} in the sense of \cite{BGS07b}, and it has a null eta-Einstein structure with constant $\Phi$-sectional curve $-3$;
\item for $\bfa\neq (0,\cdots,0)$ the Sasakian metrics in $\cals_{1,\bfa}$ are complete and extremal, but they do not have constant scalar curvature;
\item  $\grk(\cald,J)=\ge(\cald,J)$;
\item for generic $\bfa$, the automorphism group $\gA\gu\gt(\cals_{1,\bfa})$ is the $n$-torus $T^n.$

\end{enumerate}
\end{theorem}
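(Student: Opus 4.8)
The plan is to transfer all five assertions to the transverse Kähler geometry of the Reeb foliation and then to exploit that the common underlying CR structure is spherical. Statement (1) needs no new argument: it is Lemma~\ref{sasconeHeis} read through the reparametrization $a_i=-\tfrac12 b_i$ and the $\grS_n$-ordering already described, which presents $\grk(\cald,J)={\mathcal S}(\cald,J)/\gC\gR(\gH^{2n+1};\cald,J)$ as $\{0\le a_1\le\cdots\le a_n\}$. I would record this in a line, and then attach to each Reeb field $\xi_\bfb=\xi+\sum_i b_iX_{ii}$ in the cone its transverse structure: the flow of $\xi_\bfb$ is a free, proper $\bbr$-action on $\bbr^{2n+1}$ (its $z$-velocity has positive time-average, so orbits are nonperiodic lines) with quotient $\bbc^n$, onto which $(\cald,J)$ and $d\eta_\bfb$ descend to a Kähler metric $g^T_\bfb$; extremality of $\cals_{1,\bfb}$ is by definition extremality of $g^T_\bfb$.

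The heart of the matter is the observation, exactly as in the sphere case of \cite{BGS06}, that the standard CR structure $(\cald,J)$ is CR-flat, so that its Chern--Moser tensor vanishes. Under the correspondence between the Chern--Moser (Webster) curvature of a strictly pseudoconvex CR structure and the Bochner tensor of a compatible transverse Kähler metric---the device behind Kamishima's work \cite{Kam06}---this forces every $g^T_\bfb$ to be Bochner-flat. I would then invoke Bryant's analysis of Bochner-Kähler metrics \cite{Bry01}, in particular that the $(1,0)$-gradient of their scalar curvature is holomorphic, i.e. that a Bochner-Kähler metric is extremal. Since this applies to every $\bfb$, each $\cals_{1,\bfb}$ is extremal; this is statement (4), $\grk(\cald,J)=\ge(\cald,J)$, and the extremal half of (3).

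For the scalar curvature I would use the rigidity that a Bochner-Kähler metric has constant scalar curvature only if it is a complex space form, i.e. has constant holomorphic sectional curvature. A direct computation of $g^T_\bfb$ shows it is flat exactly when $\bfb=0$: this is the standard structure $\cals_{1,0}$, whose transverse metric is the flat metric on $\bbc^n$, giving the null eta-Einstein (transverse Ricci-flat) property and constant $\Phi$-sectional curvature $-3$, and which is strongly extremal in the sense of \cite{BGS07b} precisely because it is of constant scalar curvature. For $\bfb\ne0$ the transverse holomorphic sectional curvature is non-constant, so $\cals_{1,\bfb}$ is not of constant scalar curvature, establishing (2) and the remaining clause of (3). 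Completeness of $g_{1,\bfb}$ I would obtain from the $\bbr$-bundle $\bbr^{2n+1}\to\bbc^n$ of the first paragraph: the fibres are the complete, totally geodesic Reeb orbits and the base carries the complete Bochner-Kähler metric $g^T_\bfb$ from Bryant's classification, so the total Sasakian metric is complete.

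Finally, for (5) I would first note that an element $Y\in\gc\gr(\gH^{2n+1};\cald,J)$ preserving $\cald$ fixes $\eta_\bfb$ if and only if it commutes with $\xi_\bfb$, so the Lie algebra of $\gA\gu\gt(\cals_{1,\bfb})$ is the centralizer of $\xi_\bfb$ in $(\gu(n)\oplus\bbr D)\ltimes\gh^{2n+1}$. A short bracket computation---using $[X_{jj},R_i]=-2\delta_{ij}S_i$, $[X_{jj},S_i]=2\delta_{ij}R_i$, and $[\xi,D]=2\xi$---shows that for generic $\bfb$ (all $b_i$ distinct and nonzero) the dilation $D$ produces an uncancelable $\xi$-term and so drops out, the surviving $\gu(n)$-part must lie in the maximal torus $\langle X_{11},\dots,X_{nn}\rangle$, and the Heisenberg generators $R_i,S_i$ are rotated nontrivially and hence forced to vanish. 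The centralizer is thus the maximal abelian subalgebra $\langle\xi,X_{11},\dots,X_{nn}\rangle$; modulo the central (Reeb) line its compact part is the torus $T^n$ of the statement. I expect the genuine obstacle to be the curvature input of the second paragraph---making the CR--Bochner correspondence and Bryant's extremality rigorous in this non-compact transverse setting---together with the explicit computation of $g^T_\bfb$ and its scalar curvature that singles out $\bfb=0$; the completeness and centralizer arguments are then comparatively routine.
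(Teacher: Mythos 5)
Your proposal follows the paper's proof in its essential architecture: (1) from Lemma \ref{sasconeHeis}; the free proper $\bbr$-action of the Reeb flow with quotient $\bbc^n$; vanishing of the Chern--Moser tensor plus Webster's theorem to conclude the transverse metrics are Bochner-flat; and the Bryant--Abreu extremality of Bochner-flat metrics to obtain (4) and the extremal half of (3). Your treatment of (5) is a legitimate, more self-contained substitute for the paper's citation of Lemma 6.6 of \cite{BGS06}: the bracket computation of the centralizer of $\xi_\bfb$ in $(\gu(n)\oplus\bbr D)\ltimes\gh^{2n+1}$ is correct, and your identification of the centralizer as $\langle \xi,X_{11},\dots,X_{nn}\rangle$ with compact part $T^n$ is, if anything, more careful about the Reeb $\bbr$-factor than the statement itself. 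However, two of your steps contain genuine gaps.

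First, completeness is argued circularly. You conclude that $g_{1,\bfb}$ is complete because the base ``carries the complete Bochner--K\"ahler metric $g^T_\bfb$ from Bryant's classification,'' but identifying $g^T_\bfb$ with one of Bryant's \emph{complete} metrics presupposes exactly the completeness you are trying to establish. The paper is explicit about the order of logic: completeness must be shown first --- it invokes Kamishima's Proposition 3.5 \cite{Kam06}, which uses the sub-Riemannian identification of closures of CC-balls with compact subsets of the one-point compactification $S^{2n+1}$, together with the Reeb orbits being complete geodesics --- and only then are the quotient metrics matched with Bryant's list, which is also how the paper concludes that the Sasaki cone exhausts all compatible structures. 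A direct repair exists: from $g^T_\bfa=f_\bfa\,g^T_0$ with $f_\bfa=(1+\sum_j a_j|z_j|^2)^{-1}$, radial arclength diverges logarithmically, so $g^T_\bfa$ is complete; note also that ``complete base plus complete fibres implies complete total space'' is not automatic for a Riemannian submersion and needs the unit-Killing property of $\xi_\bfa$ or a horizontal-lift argument.

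Second, the constant-scalar-curvature dichotomy. Your rigidity claim --- a Bochner--K\"ahler metric of constant scalar curvature is a complex space form --- is not correct as stated: constant scalar curvature forces parallel Ricci (Matsumoto--Tanno), hence local symmetry, but the classification also contains local products of two space forms of opposite holomorphic sectional curvature; and in any case your verification that $g^T_\bfb$ is ``flat exactly when $\bfb=0$'' excludes only the flat case, leaving non-flat space forms (e.g.\ complex hyperbolic) unruled-out for $\bfb\neq 0$. The paper sidesteps all of this by a direct computation (Lemma \ref{scalarcurv}): since $f_\bfa$ is basic, the conformal-change formula applied to $g^T_\bfa=f_\bfa g^T_0$ yields the closed form $s_{g_\bfa}=-n(2n+7)\frac{\sum_j a_j^2|z_j|^2}{1+\sum_i a_i|z_i|^2}+2n(4|\bfa|-1)$, visibly constant iff $\bfa=0$. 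That explicit formula also does work your route cannot: it exhibits $s_{g_\bfa}$ as an affine function of the moment map of the $T^n$-action (Remark \ref{momrem}), which is how extremality is seen concretely, and it underlies the strong-extremality claim for $\cals_{1,0}$; your gloss that $\cals_{1,0}$ is strongly extremal ``precisely because it is of constant scalar curvature'' conflates two different variational problems, since strong extremality in \cite{BGS07b} means criticality over the space of Reeb fields in the Sasaki cone, not of the Calabi-type functional.
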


\begin{proof}
Let us consider the vector field $\xi_\bfa=\xi-2\sum_ia_iX_{ii}$ as a Reeb field. The corresponding contact form is then 
\begin{equation}\label{eta_a}
\eta_\bfa=\displaystyle{\frac{\eta}{1+\sum_ia_i(x_i^2+y_i^2)}}.
\end{equation}
As discussed previously it is easy to see that the vector field $\xi_\bfa$ is complete and nowhere vanishing on $\gH^{2n+1}.$

\begin{lemma}
The flow of $\xi_\bfa$ generates a free proper action $\cala$ of $\bbr$ on $\gH^{2n+1}$ and the quotient space $\gH^{2n+1}/\cala(\bbr)$ is biholomorphic to $\bbc^n.$
\end{lemma}

\begin{proof}
 The integral curves of $\xi_\bfa$ are easily seen to be 
$$x_i(t)=x_i(0)\cos 4a_it+y_i(0)\sin 4a_it,\qquad y_i(t)=y_i(0)\cos 4a_it -x_i(0)\sin 4a_it,$$
$$z(t)=t +z(0) +\sum_i2a_i[x_i(0)y_i(0)\sin^2 4a_it +\frac{x_i(0)^2-y_i(0)^2}{4}\sin 8a_it].$$
If not all $a_i$s vanish the curves are helices, and isomorphic to $\bbr$; whereas, if $a_i=0$ for all $i$ the curve is the $z$ axis. In any case the group law holds and one has an action of $\bbr$ which
one easily sees is both free and proper. Moreover, since $\gH^{2n+1}$ is diffeomorphic to $\bbr^{2n+1}$, the quotient is diffeomorphic to $\bbr^{2n}.$ Since $\Phi_\bfa$ is invariant under the flow of $\xi_\bfa$ and $\Phi_a|_\cald=J,$ almost complex structure induced on the quotient is integrable.

\end{proof}

Continuing with the proof of Theorem \ref{sasex} we notice that the standard CR structure has vanishing Chern-Moser tensor \cite{ChMo74}, and by a result of Webster \cite{Web77} this means that the submersed metrics on the quotient $\bbc^n$ are Bochner-flat. So if we can show that they are complete, we can identify them with Bryant's complete Bochner-flat metrics on $\bbc^n$ \cite{Bry01}. This was done in detail by Kamashima \cite{Kam06}. He uses sub-Riemannian geometry on $\gH^{2n+1}$ identifying the closure of CC-balls with compact subsets of the one point compactification $S^{2n+1}$. We refer to Proposition 3.5 of  \cite{Kam06} for details. Since the Reeb orbits are complete geodesics with respect to the Sasakian metrics $g_\bfa$, it follows that these metrics are complete. Moreover, since Bryant gives a classification of complete Bochner-flat metrics on $\bbc^n$, we obtain all the Sasakian structures in the Sasaki cone $\grk(\cald,J).$ Also as noted by Bryant a result of Abreu \cite{Abr98} implies that Bochner-flat metrics are extremal in the sense of Calabi, so the Sasakian metrics $g_\bfa$ are also extremal which implies (4). This can be seen explicitly once we compute the scalar curvature (see Remark \ref{momrem} below).

The isotropy subgroup of a Sasakian structure $\cals_{1,\bfa}\in\grk(\cald,J)$ is $\gA\gu\gt(\cals_{1,\bfa})$, and one easily sees that Lemma 6.6 of \cite{BGS06} applies to our case, so (5) follows. (This is also noted in \cite{Bry01} for the Bochner-flat metrics on $\bbc^n$, and Kamashima \cite{Kam06} essentially computes $\gA\gu\gt(\cals_{1,\bfa})$ in all cases.) It remains to show that the only structure $\cals_{1,\bfa}$ with constant scalar curvature $s_{g_\bfa}$ is the standard Sasakian structure $\cals_{1,0}.$ We do so by computing $s_{g_\bfa}$ explicitly.

\begin{lemma}\label{scalarcurv}
The scalar curvature $s_{g_\bfa}$ of the Sasakian structures $\cals_{1,\bfa}$ is given by
$$s_{g_\bfa}=-n(2n+7)\frac{\sum a_j^2|z_j|^2}{1+\sum a_i|z_i|^2}+2n(4|\bfa|-1)$$
where $|\bfa|=\sum_ia_i.$
\end{lemma}

\begin{proof}
First we notice that $s_{g_\bfa}$ is related to the scalar curvature of the transverse metric $g^T_\bfa$ by $s_{g_\bfa}=s_{g_\bfa}^T-2n$ where $g_\bfa=g^T_\bfa+\eta_\bfa\otimes \eta_\bfa,$ and we see from Equation (\ref{eta_a}) that $g^T_\bfa= f_\bfa g^T_0$ where $g^T_0$ is the flat transverse metric of the standard Sasakian structure and $f_\bfa=(1+\sum_ja_j|z_j|^2)^{-1}$. Since $f_\bfa$ is basic and $g^T_\bfa=f_\bfa g^T_0$, we can compute $s_{g_\bfa}^T$ from $s_{g^T_0}=0$ by using the well-known formula for how the scalar curvature changes under conformal changes of metrics \cite{Bes}.
\end{proof}

Clearly from this lemma $s_{g_\bfa}$ is constant if and only if $\bfa=0$ which completes the proof of Theorem \ref{sasex}.

\end{proof}

\begin{remark}\label{momrem} 
{\rm The functions that appear in the expression for $s_{g_\bfa}$ are the components of the moment map $\mu_\bfa:\gH^{2n+1}\ra{1.5} \gt_n^*$ defined by $<\mu_a,\grt>=\eta_\bfa(\grt)$ associated to the action of the maximal torus $T^n$ in $\gC\gR(\cald,J)$ whose Lie algebra is spanned by $\{X_{ii}\}_i$. Indeed, with respect to the standard basis of $\gt_n^*=\gt_n=\bbr^n$ we see that the moment map is $\mu_\bfa(z,\bfz,\bar{\bfz})=(h_1,\cdots,h_n)$ with  components 
$$h_i(\bfz,\bar{\bfz})=\eta_\bfa(X_{ii})=-2\frac{|z_i|^2|}{1+\sum_ja_j|z_j|^2},$$
and the scalar curvature $s_{g_\bfa}$ is an affine function of these components, namely
$$s_{g_\bfa}=2n(4|\bfa|-1)+\frac{n(2n+7)}{2}\sum_ia_i^2h_i(\bfz,\bar{\bfz}).$$
Note that although this structure is not toric in the Sasakian sense due to the non-compactness, the transverse geometry is toric, and we can use the result of Abreu \cite{Abr98} to prove that our Sasakian metrics $g_\bfa$ are extremal. The point is that the only way that $\partial_{g_\bfa}^\# s_{g_\bfa}$ can be holomorphic is that the scalar curvature be an affine function of the moment map coordinates.

}

\end{remark}

\begin{remark}
{\rm The last statement of Theorem \ref{sasex} says that the Sasakian structure $\cals_{1,0}$ is {\it strongly extremal} in the sense of \cite{BGS07b}. There the authors described another variational problem varying over the set of Reeb vector fields that lie in the Lie algebra of a maximal torus.  The critical points of this functional are called {\it strongly extremal Sasakian structures}. In this case $\cals_{1,0}$ is a null eta-Einstein structure with constant $\Phi$-sectional curve $-3$; whereas, the generic Sasakian structures $\cals_{1,\bfa}$ are not eta-Einstein although they are extremal, of course}.
\end{remark}

\section{Compact Quotients}
 
One can easily construct compact nilmanifolds $\gN^{2n+1}_k$  by forming the quotient manifold by the subgroup $\gH^{2n+1}(\bbz,k)$ of $\gH^{2n+1}$ obtained by restricting the coordinates $(z,x_1,\cdots,x_n,y_1,\cdots,y_n)$ in Equation (\ref{Heisgroup}) to take values in the set of all integers divisible by the integer $k> 0.$ It is easy to see that $H_1(\gN^{2n+1}_k,\bbz)=\bbz^{2n}+\bbz_k.$ More generally, we consider a lattice subgroup $\grG$ of $\gN^{2n+1}$ with a compact quotient $\gN^{2N+1}(\grG)=\gH^{2n+1}/\grG$.  Then up to an automorphism of $\gH^{2n+1}$ each nilmanifold $\gN^{2N+1}(\grG)$ is determined by an $n$-tuple $\bfl=(l_1,\cdots,l_n)$ of integers such that $l_i|l_{i+1}$ \cite{Tol78,Fol04}. If we take the right action of $\grG_\bfl=\grG$ of $\gH^{2n+1}$, the quotient inherits the standard right Sasakian structure $(\xi^R,\eta^R,\Phi^R,g^R)$, since it is invariant under this action. The quotient is also a homogeneous space inherited from the left action of $\gH^{2n+1};$ however, this does not leave the right Sasakian structure on $\gH^{2n+1}$ invariant, so the Sasakian structure and homogeneous structure on $\gN^{2n+1}(\grG_\bfl )$ are incompatible \cite{BGO06}. (Of course, one can interchange right and left and obtain an isomorphic model of $\gN^{2n+1}(\grG_\bfl ).$) For any lattice subgroup $\grG_\bfl$ its centralizer in $\gH^{2n+1}$ is the one parameter group generated by the standard Reeb vector field $\xi=\partial_z$, so only the standard Sasakian structure passes to the quotient manifold $\gN^{2n+1}(\grG_\bfl)$.  Indeed up to a finite group the automorphism group $\gA\gu\gt(\cals)$ is one dimensional, and generated by the Reeb vector field. So the Sasaki cone is one dimensional. Summarizing we have

\begin{proposition}\label{compquot}
The only Sasakian structure $\cals_{1,\bfa}$ that passes to a compact quotient is the standard one with $\bfa=0.$
Moreover, the induced Sasakian structure on the quotient $\gN^{2n+1}(\grG_\bfl)$ is null-eta-Einstein with constant $\Phi$-holomorphic sectional curvature $-3,$ and the quotient space $\gN^{2n+1}(\grG_\bfl )/\calf_\xi$ is an Abelian variety $\bbc^n/\grL_\bfl$ determined by the lattice $\grL_\bfl=\pi(\grG_\bfl),$ where $\pi$ is induced by the natural quotient map $\gH^{2n+1}\ra{1.9} \gH^{2n+1}/\calf_\xi\approx \bbc^n.$
\end{proposition}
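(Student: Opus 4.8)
The plan is to prove the three assertions in turn. For the rigidity statement I would note that $\cals_{1,\bfa}$ descends to $N=\gN^{2n+1}(\grG_\bfl)$ if and only if its Reeb flow $\exp(t\xi_\bfa)$ preserves the fibres of $\pi\colon\gH^{2n+1}\to N$, i.e. normalizes the deck group $\grG_\bfl$; since $\grG_\bfl$ is discrete and the flow is connected, this forces the flow to centralize $\grG_\bfl$. Hence $\xi_\bfa$ must lie in the centralizer of $\grG_\bfl$ inside $\gC\gR(\gH^{2n+1};\cald,J)_0=(U(n)\times\bbr^+)\ltimes\gH^{2n+1}$. Writing $\xi_\bfa=\xi-2\sum_ia_iX_{ii}$, and using that $\ga$ is abelian so that $\exp(t\xi_\bfa)=\exp(t\xi)\exp(-2t\sum_ia_iX_{ii})$, its $\gH^{2n+1}$-part is the central $z$-translation and its $U(n)$-part is a rotation with angles proportional to the $a_i$. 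Conjugating a translation by a rotation $A\in U(n)$ yields the translation by the rotated vector, so centralizing the cocompact lattice $\grG_\bfl$ forces $A$ to fix every lattice vector, whence $A=\mathrm{id}$ and $\bfa=0$. This is precisely the centralizer statement quoted before the proposition.

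For the second assertion, since $\cals_{1,0}$ is $\grG_\bfl$-invariant the projection $\pi$ is a Riemannian covering and a local isomorphism of Sasakian structures, so the induced structure on $N$ has the same pointwise curvature invariants. By Theorem~\ref{sasex}(2) the standard structure is null eta-Einstein with constant $\Phi$-sectional curvature $-3$; both are local conditions and therefore pass to the quotient.

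For the third assertion, I would use that $\grG_\bfl$ contains a translation in $z$, so the Reeb orbits of $\xi=\partial_z$ close up to circles in $N$ and $\calf_\xi$ is a regular Boothby--Wang foliation presenting $N$ as a principal $S^1$-bundle over its leaf space. The earlier lemma (the $\bfa=0$ case) identifies $\gH^{2n+1}/\calf_\xi$ biholomorphically with $\bbc^n$; as $\pi$ collapses the $z$-direction, $\grG_\bfl$ projects to a rank-$2n$ lattice $\grL_\bfl=\pi(\grG_\bfl)\subset\bbc^n$, and $N/\calf_\xi=\bbc^n/\grL_\bfl$ is a flat complex torus. Finally, the transverse Kähler form $d\eta$ is the curvature of this $S^1$-bundle, hence represents an integral positive $(1,1)$-class; this polarization upgrades the complex torus to an Abelian variety.

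The step I expect to demand the most care is the rigidity in the first assertion: pinning down that the Reeb flow must centralize, and not merely normalize, $\grG_\bfl$, and evaluating that centralizer explicitly inside the full CR automorphism group. The only other non-formal point is the integrality of $d\eta$ furnishing the polarization in the last assertion; everything else follows formally from the results already established.
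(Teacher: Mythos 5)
Your proof is correct and follows essentially the same route as the paper, whose argument is the paragraph preceding the proposition: your rigidity step is exactly the paper's observation that the centralizer of the lattice $\grG_\bfl$ forces the Reeb field onto the ray of $\xi=\partial_z$, and the remaining assertions descend by locality of the curvature conditions and by the identification of the leaf space with a polarized torus, for which the paper defers to Folland \cite{Fol04}. If anything you tighten the paper's sketch: since $\xi_\bfa\notin\gh^{2n+1}$ for $\bfa\neq 0$, the centralizer condition must be checked in the full group $(U(n)\times\bbr^+)\ltimes\gH^{2n+1}$ rather than in $\gH^{2n+1}$ alone, and your rotation argument (together with the Boothby--Wang/integrality justification of the polarization) supplies precisely the details the paper leaves implicit.
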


The procedure can be inverted, that is we can begin with a polarized Abelian variety $(\bbc^n/\grL_\bfl,L)$ where $L$ is a positive line bundle on $\bbc^n/\grL_\bfl$, and construct $\gN^{2n+1}(\grG_\bfl)$ as the total space of the unit circle bundle in the line bundle $L.$ Folland \cite{Fol04} proved that, up to holomorphic CR equivalence there is a 1-1 correspondence between the nilmanifolds $\gN^{2n+1}(\grG_\bfl)$ and polarized Abelian varieties $(\bbc^n/\grL_\bfl,L)$. 

A result of Marinescu and Yeganefar \cite{MaYe07} says that every compact Sasakian manifold is {\it holomorphically fillable} in the sense that it can be realized as the boundary of a compact strictly pseudoconvex complex manifold $V$. In fact, Folland \cite{Fol04} shows how to present the nilmanifolds $\gN^{2n+1}(\grG_\bfl)$ as a boundary of the unit disc bundle $V$ in the dual bundle $L^*$ by using the theta functions that determine $L.$ Since the zero section of $L^*$ is identified with the Abelian variety $(\bbc^n/\grL_\bfl)$ this is not a Stein filling. In fact, Stein fillings of $\gN^{2n+1}(\grG_\bfl)$ may not exist \cite{P-P07c}. It is interesting to compare this situation with that of links of isolated hypersurface singularities by weighted homogeneous polynomials. These are called {\it Milnor fillable} in \cite{P-P07c}. By the well-known Milnor Fibration Theorem $\gN^{2n+1}(\grG_\bfl)$ can be represented as a link of a weighted homogeneous polynomial only if $n=1,$ and in this case only the nilmanifolds $\gN^3_k$ with $k=1,2,3$ can be so represented \cite{BG05}. In dimension three holomorphically fillable implies Stein fillable, so all Sasakian 3-manifolds are Stein fillable.

\def\cprime{$'$} \def\cprime{$'$} \def\cprime{$'$} \def\cprime{$'$}
  \def\cprime{$'$} \def\cprime{$'$} \def\cprime{$'$} \def\cprime{$'$}
  \def\cdprime{$''$}
\providecommand{\bysame}{\leavevmode\hbox to3em{\hrulefill}\thinspace}
\providecommand{\MR}{\relax\ifhmode\unskip\space\fi MR }
\providecommand{\MRhref}[2]{%
  \href{http://www.ams.org/mathscinet-getitem?mr=#1}{#2}
}
\providecommand{\href}[2]{#2}

\end{document}